\documentclass[12pt,leqno,draft]{article}
\usepackage{amsfonts}
\usepackage{amsmath, amsthm, amsfonts, amssymb, color}
\usepackage{mathrsfs}
\usepackage{color}
\newcommand{\Div}{\mathrm{div}}

\newtheorem{thm}{Theorem}[section]

\theoremstyle{definition}

\newcommand{\scr}[1]{\mathscr #1}
\definecolor{wco}{rgb}{0.5,0.2,0.3}

\numberwithin{equation}{section} \theoremstyle{remark}

\newcommand{\ua}{\uparrow}

\title{{\bf Existence and
 Ergodicity for Nonlinear Fokker-Planck Equations for Probability Density Functions  }\footnote{Supported in part by the National Key R\&D Program of China (2022YFA1006000), NNSFC(12531007,12271398)
 and Deutsche Forschungsgemeinschaft (DFG, German Research Foundation) (Project-ID 317210226, SFB 1283).} }
\author{
{\bf Xing Huang$^{a)}$, Michael R\"ockner$^{b),c),d)}$,    Feng-Yu Wang$^{a)}$  }\\
\footnotesize{  a)  Center for Applied Mathematics and KL-AAGDM, Tianjin
University, Tianjin 300072, China} \\
\footnotesize{  b) Faculty of Mathematics, Bielefeld University, 33615 Bielefeld, Germany}\\
\footnotesize{ c) Academy of Mathematics and System Sciences, Chinese Academy of Sciences, Beijing, China}\\
\footnotesize{  d) School of Data Science, The Chinese University of Hong Kong, Shenzhen, China}\\
\footnotesize{xinghuang@tju.edu.cn, roeckner@math.uni-bielefeld.de, wangfy@tju.edu.cn  }\\
}
\begin{document}
\allowdisplaybreaks
\def\R{\mathbb R}  \def\ff{\frac} \def\ss{\sqrt} \def\B{\mathbf
B} \def\W{\mathbb W}
\def\N{\mathbb N} \def\kk{\kappa} \def\m{{\bf m}}
\def\ee{\varepsilon}\def\ddd{D^*}
\def\dd{\delta} \def\DD{\Delta} \def\vv{\varepsilon} \def\rr_V{\rho}
\def\<{\langle} \def\>{\rangle} \def\GG{\Gamma} \def\gg{\gamma}
  \def\nn{\nabla} \def\pp{\partial} \def\E{\mathbb E}
\def\d{\text{\rm{d}}} \def\bb{\beta} \def\aa{\alpha} \def\D{\scr D}
  \def\si{\sigma} \def\ess{\text{\rm{ess}}}
\def\beg{\begin} \def\beq{\begin{equation}}  \def\F{\scr F}
\def\Ric{\text{\rm{Ric}}} \def\Hess{\text{\rm{Hess}}}
\def\e{\text{\rm{e}}} \def\ua{\underline a} \def\OO{\Omega}  \def\oo{\omega}
 \def\tt{\tilde} \def\Ric{\text{\rm{Ric}}}
\def\cut{\text{\rm{cut}}} \def\P{\mathbb P} \def\ifn{I_n(f^{\bigotimes n})}
\def\C{\scr C}      \def\aaa{\mathbf{r}}     \def\r{r}
\def\gap{\text{\rm{gap}}} \def\prr{\pi_{{\bf m},\varrho}}  \def\r{\mathbf r}
\def\Z{\mathbb Z} \def\vrr{\varrho} \def\ll{\lambda}
\def\L{\scr L}\def\Tt{\tt} \def\TT{\tt}\def\II{\mathbb I}
\def\i{{\rm in}}\def\Sect{{\rm Sect}}  \def\H{\mathbb H}
\def\M{\scr M}\def\Q{\mathbb Q} \def\texto{\text{o}} \def\LL{\Lambda}
\def\Rank{{\rm Rank}} \def\B{\scr B} \def\i{{\rm i}} \def\HR{\hat{\R}^d}
\def\to{\rightarrow}\def\l{\ell}\def\iint{\int}
\def\EE{\scr E}\def\Cut{{\rm Cut}}
\def\A{\scr A} \def\Lip{{\rm Lip}}
\def\BB{\scr B}\def\Ent{{\rm Ent}}\def\L{\scr L}
\def\R{\mathbb R}  \def\ff{\frac} \def\ss{\sqrt} \def\B{\mathbf
B}
\def\N{\mathbb N} \def\kk{\kappa} \def\m{{\bf m}}
\def\dd{\delta} \def\DD{\Delta} \def\vv{\varepsilon} \def\rr{\rho}
\def\<{\langle} \def\>{\rangle} \def\GG{\Gamma} \def\gg{\gamma}
  \def\nn{\nabla} \def\pp{\partial} \def\E{\mathbb E}
\def\d{\text{\rm{d}}} \def\bb{\beta} \def\aa{\alpha} \def\D{\scr D}
  \def\si{\sigma} \def\ess{\text{\rm{ess}}}
\def\beg{\begin} \def\beq{\begin{equation}}  \def\F{\scr F}
\def\Ric{\text{\rm{Ric}}} \def\Hess{\text{\rm{Hess}}}
\def\e{\text{\rm{e}}} \def\ua{\underline a} \def\OO{\Omega}  \def\oo{\omega}
 \def\tt{\tilde} \def\Ric{\text{\rm{Ric}}}
\def\cut{\text{\rm{cut}}} \def\P{\mathbb P} \def\ifn{I_n(f^{\bigotimes n})}
\def\C{\scr C}      \def\aaa{\mathbf{r}}     \def\r{r}
\def\gap{\text{\rm{gap}}} \def\prr{\pi_{{\bf m},\varrho}}  \def\r{\mathbf r}
\def\Z{\mathbb Z} \def\vrr{\varrho} \def\ll{\lambda}
\def\L{\scr L}\def\Tt{\tt} \def\TT{\tt}\def\II{\mathbb I}
\def\i{{\rm in}}\def\Sect{{\rm Sect}}  \def\H{\mathbb H}
\def\M{\scr M}\def\Q{\mathbb Q} \def\texto{\text{o}} \def\LL{\Lambda}
\def\Rank{{\rm Rank}} \def\B{\scr B} \def\i{{\rm i}} \def\HR{\hat{\R}^d}
\def\to{\rightarrow}\def\l{\ell}\def\BB{\mathbb B}
\def\8{\infty}\def\I{1}\def\U{\scr U} \def\n{{\mathbf n}}\def\v{V}\def\LL{{\bf L}}\def\rr{\rho}
\maketitle

\begin{abstract}  For a class of nonlinear Fokker-Planck equations for probability densities,
we construct  distributional solutions by  using an approximation scheme with nonlinear Neumann problems in balls. Moreover, we introduce nonlinear functional inequalities to estimate   the convergence   rates  of the distributional solutions    to    the stationary solution, with respect to   the $p$-variance for $p\in [1,2]$ including the relative entropy $(p=1)$ and the variance $(p=2)$.  These nonlinear functional inequalities are then established in both non-degenerate and degenerate settings, so that the main results are applied
   to  a number of typical models with  various   convergence rates.

 \end{abstract} \noindent
 AMS Subject Classification:\  35Q84, 60H30.   \\
\noindent
 Keywords: Nonlinear Fokker-Planck equations;   existence of distributional solutions;  non-linear functional inequalities, $p$-variance;   convergence rate.

\section{Introduction}

 Let $V\in C^2(\R^d)$ with   $Z(V):=\int_{\R^d}\e^{-V(x)}\d x<\infty,$ such that
 $$\mu(\d x):=\ff 1 {Z(V)} \e^{-V(x)}\d x=:\rr_V(x)\d x$$
 is a probability measure on $\R^d$.   Let $\d x$ denote the Lebesgue measure on $\R^d$ and set
 $$\D_1^+:= \bigg\{\rr\in L^1(\R^d):\ \rr\ge 0,\ \int_{\R^d} \rr(x)\d x=1\bigg\},$$
 $$\D_{1,\mu}^+:= \bigg\{ u\in L^1(\mu):\ u\ge 0,\ \int_{\R^d} u(x)\mu(\d x)=1\bigg\}.$$
 We consider the following nonlinear PDE for   $\rr_t\in \D_1^+$ and $u_t:=\rr_t/\rr_V\in \D_{1,\mu}^+$:
 \beq\label{E1} \pp_t \rr_t  ={\rm div} \Big\{\big(\phi'(\rr_t)+ \psi'(u_t) \big) \rr_V \nn u_t \Big\},  \ \ \ t\ge 0,\end{equation}
 where  $ \phi,\psi\in   C^2([0,\infty); [0,\infty)) $ satisfy
$$   \phi(0)=\psi(0)=0,\  \phi', \psi'\ge 0,\ \ \text{and \ either \ } \phi'|_{(0,\infty)}>0\ \text{or}\ \psi'|_{(0,\infty)}>0.  $$

PDE \eqref{E1} is indeed a nonlinear Fokker-Planck-Kolmogorov equation. To see this, let $\ell_\eta$ denote the distribution density function of an $\R^d$-valued random variable $\eta$ with respect to $\d x$, and consider the
 following density dependent (Nemytskii-type) SDE on $\R^d$:
 \beq\label{E1'} \beg{split} \d X_t=&\, \bigg(\ff{2[\phi(\ell_{X_t}) +\rr_V \psi(\ell_{X_t}/\rr_V)]}  {\ell_{X_t}}\bigg)^{\ff 1 2}(X_t)\,\d W_t\\
 &\,-\bigg[\bigg(\phi'(\ell_{X_t}) {+}\ff{\rr_V \psi\big(\ell_{X_t}/\rr_V\big)}{\ell_{X_t}}\bigg) \nn V \bigg](X_t) \d t,\ \ t\ge 0,\end{split}\end{equation}
 where $W_t$ is the $d$-dimensional Brownian motion on a complete filtrated probability space $(\OO,\F,\{\F_t\}_{t\ge 0},\P)$.
 Then for any probabilistically weak solution to \eqref{E1'} with $\rr_t:=\ell_{X_t}$ satisfying \eqref{conps} below, $\rr_t$ solves \eqref{E1}. This is an easy consequence of It\^o's formula. And vice versa, any distributional solution in the sense of Definition \ref{D1.2} below leads to a (probabilistically) weak solution to \eqref{E1'}. We refer to \cite{BR2020} and \cite{BR2024LNM} for details.

 When $\phi=0$  this equation covers the generalized porous media equation
  \beq\label{E1-1} \pp_t u_t= L_V \psi(u_t),\ \ \ L_V:= \DD-\nn V\cdot\nn\end{equation}
   for probability density functions $u_t$ with respect to $\mu$,
   and when $\psi=0$ it reduces to
  \beq\label{E1-2} \pp_t \rr_t=  \DD  \phi(\rr_t) + {\rm div} \big(\rr_t\phi'(\rr_t)\nn V\big),\ \ t\ge 0. \end{equation}

Heuristically, for any $f\in C_0^\infty(\R^d)$,   the  integration by parts formula gives
  \beg{align*} & \int_{\R^d}   \Big(f {\rm div} \Big\{\big(\phi'(\rr_t)+ \psi'(u_t) \big) \rr_V \nn u_t \Big\}\Big)(x)\d x\\
    &= -\int_{\R^d} \Big\<\nn f, \big(\phi'(\rr_t)+ \psi'(u_t) \big) \rr_V \nn u_t \Big\>(x)\d x\\
 &= \int_{\R^d} \Big(\big(\phi(\rr_t) +\rr_V\psi(u_t)   \big)  \DD f -   \big(\rr_t\phi'(\rr_t) +\psi(u_t)  \rr_V\big) \<\nn V,\nn f\>   \Big)(x) \d x.\end{align*}
 So,    we define the distributional solution to \eqref{E1}   as follows.

\beg{defn}[Distributional solution]\label{D1.2}
 A distributional solution to \eqref{E1} is a measurable map
 $$\rr: [0,\infty)\times \R^d\to [0,\infty)$$
 such that $ \rr_t \in \D_1^+$   for $t\ge 0$,
\begin{align}\label{conps}|\phi(\rr_t)+\rr_V \psi(u_t)|+|\rr_t\phi'(\rr_t) +\psi(u_t)  \rr_V| \in L_{loc}^1(\R^d),
  \end{align}
  and  the following integral equation holds for any $f\in C_0^\infty(\R^d)$:
 \beq\label{DB} \beg{split} & \int_{\R^d} \big(f\rr_t-f\rr_0\big)(x)\d x \\
 &=   \int_0^t \d s\int_{\R^d} \Big(\big(\phi(\rr_s) +\rr_V\psi(u_s)   \big)  \DD f -   \big(\rr_s\phi'(\rr_s) +\psi(u_s)  \rr_V\big) \<\nn V,\nn f\>  \Big)(x) \d x. \end{split}\end{equation}
\end{defn}

Since $  \rr_V/\rr_V =1$,
 $\rr_t\equiv \rr_V$ solves \eqref{E1} as a stationary distributional solution.
In this paper, we aim to construct distributional solutions and study the convergence rates of $u_t\to 1$ (equivalently, $\rr_t\to\rr_V$) in $L^2(\mu)$ and relative entropy:
$$ \lim_{t\to\infty}   \mu\big(|u_t-1|^2\big) =0,\ \ \  \lim_{t\to\infty}   \mu\big(u_t \log u_t\big) =0,$$
where, and in the following, for a measurable space $(E,\F,\nu)$,
 $$\nu(f):= \int_{E}f\d\nu,\ \ \ f\in L^1(\nu).$$

  We will need the following assumption on $V$ and the initial value $\rr_0$.

\begin{enumerate}
\item[{\bf (A)}]
$V \in C^2(\mathbb R^d)$,   $\inf V>-\infty$, $Z(V):=\int_{\R^d}\e^{-V(x)}\d x<\infty$. Moreover,  $\rr_0\in \D_1^+$ such that $u_0:=\rr_0/\rr_V$ is bounded.
  \end{enumerate}

 \beg{thm}\label{TT} Assume {\bf (A)}.
  Then the equation    $\eqref{E1}$ has a distributional solution $\rr_t$  with  initial value $\rr_0$ such that  $\|u_t\|_\infty\le \|u_0\|_\infty,\ t\ge 0.$
\end{thm}

\beg{rem}  We will prove Theorem \ref{TT} by realizing the approximation scheme proposed in the proof of \cite[Proposition 4.4]{DGGW} for   the case $\phi=0$ and $\psi(s)=c s^m$ for some constants $c>0$ and $m>1$.
  However,    unlike what is claimed therein, namely that a classical solution can be constructed, we are only able to derive   distributional solutions.
Below we recall some  known    results on the existence and uniqueness results for  distributional solutions  to \eqref{E1-1} and \eqref{E1-2},   respectively.

\begin{enumerate}
\item[$(1)$]   

  When $\phi=0$, $V\in C^\infty(\R^d; [0,\infty))$ is  convex,  with $\lim_{|x|\to \infty}V(x) =\infty$,    $\nabla V\in L^1(\rho_V)$,
$\psi\in C^2$, and $0<\psi'(r)\le M$  for some constant $M>0$, the existence of distributional solutions has been  shown in  the proof of \cite[Proposition 2.5]{R}.
  \item[$(2)$]     \eqref{E1-2}  is covered by the equation
 \begin{equation}\label{geo} \pp_t \rr_t = \DD \big[\phi(\rr_t)\big]  {+} {\rm div} \big[\rr_t b(\rr_t)\nn V\big], \end{equation}
 where $b: [0,\infty)\to \R$ is  measurable.
  According to  \cite{BR2023,BR2023JFA},  if   $b$ and $\nabla V$ are bounded, and there exists a locally bounded function $\alpha:[0,\infty)\to[0,\infty)$ such that
  $$|b(r)r-b(s)s|\leq \alpha(M)|\phi(r)-\phi(s)|,\ \ M>0,\ r,s \in [-M,M],$$
   then \eqref{geo} has a unique distributional solution in $(L^1\cap L^\infty)([0,T];\D_1^+\cap L^\infty(\R^d))$ for $T>0$
  and any bounded $\rr_0\in \D_1^+$. See also \cite{Ba} for the case with a singular convolution term.   \end{enumerate} \end{rem}

Next, we consider the convergence rate of $u_t\to 1$ (equivalently, $\rr_t\to \rr_V$) as $t\to\infty$, under the $p$-variance for $p\in [1,\infty)$:
$$\D_{1,\mu}^+\ni u\mapsto {\rm Var}_{p,\mu}(u):=\beg{cases}  \dfrac{\mu(u^p) -1}{p-1}, &\text{if}\  p\in (1,\infty),\\
\mu\big(u\log u\big), &\text{if}\  p=1.\end{cases} $$
By the dominated convergence theorem, we have
$$\lim_{p\downarrow 1}  {\rm Var}_{p,\mu}(u)= {\rm Var}_{1,\mu}(u),\ \ u\in \D_{1,\mu}^+\cap L^{1+}(\mu).  $$

To estimate the convergence rate of ${\rm Var}_{p,\mu}(u_t)\to 0$ as $t\to\infty$,
 we will apply nonlinear functional inequalities restricted on the ball
 $$B_n:=\big\{x\in \R^d:\ |x|\le n\big\}$$ for large radius $n\in\mathbb N$. In applications, these local functional inequalities   are easier to establish than the corresponding global ones. Let
\begin{align}\label{mun}\mu_n(\d x):=\ff 1 {\mu(B_n)}   1_{B_n}(x) \mu(\d x),\ \ \ n\ge 1.
\end{align}
 We make the following assumption which is satisfied if $\phi(s)$ and $\psi(s)$ are of type
 $$0\le s\mapsto \int_0^R s^\theta\nu(\d\theta)$$
 for some $R\in (0,\infty)$ and a finite measure $\nu$ on $[0,R].$

 \beg{enumerate} \item[{\bf (B)}]
 For any $M\in (0,\infty)$, there exists   $\gg: [1,\infty)\to [1,\infty)$  with  $\lim_{\theta \to 1}\gg(\theta)=1$ such that
\beq\label{GG} \sup_{s \in [\theta^{-1}, \theta]} \big(\phi'(s r),\psi'(s r)\big)\le  \gg(\theta) \big(\phi'(r),\psi'(r)\big),\ \ \ r\in (0,M], \  {\theta\geq 1}.
\end{equation}
\end{enumerate}

 \beg{thm}\label{TT2} Assume {\bf (A)} and {\bf (B)}. Then $\eqref{E1}$ has  a distributional solution with initial value $\rr_0$ such that
      the following assertions hold.
  \beg{enumerate}
 \item[$(1)$] Let $p\in [1,\infty)$. If there exists a constant $\ll>0$ such that the following $L^p$-Poincar\'e inequality holds for large $n\ge 1$:
 \beq\label{PC10} \beg{split}& {\rm Var}_{p,\mu_n} ( f)\le \ff p \ll \, {\mu_n \Big(\big[\phi'(f\rr_{V} )+\psi'(f)\big]  f^{p-2} \big|\nn  f \big|^2\Big)},\\
&\qquad  \qquad\qquad \  \ f\in C_b^1(B_n),\ \mu_n ( f)=1,\ \inf_{B_n} f>0,\end{split}\end{equation}
then
  \begin{align}\label{A2} {\rm Var}_{p,\mu} \mu(u_t)  \le \e^{-\ll t} {\rm Var}_{p,\mu}(u_0),\ \ \ t\ge 0.\end{align}
\item[$(2)$] If there exists a decreasing function $\aa: (0,\infty)\to (0,\infty)$  such that the following weak Poincar\'e  inequality holds for large $n\geq 1$:
\beq\label{A4}\beg{split}  & \mu_n  ( |f-1|^2)\le  \aa(r) \mu_n  \Big(  \big[\phi'(f\rr_{V})+\psi'(f)\big]   \big|\nn  f \big|^2\Big)+ r  \|f\|_\infty^2,\\
&\ \ \qquad  \ 0\le f\in C_b^1(B_n),\ \mu_n( f)=1, \ r>0,\end{split}  \end{equation}
then
\beq\label{A5}    \mu(|u_t-1|^2)  \le  \inf_{r>0} \Big\{\e^{-2t/\aa(r)} + r  \Big\}  \|u_0 \|_\infty^2,\ \ \ t> 0. \end{equation}
\item[$(3)$] If $\eqref{A4}$  holds  for large $n\ge 1$ and $\aa(r)= \dd (1+r^{-\theta})$   for  some constants $\dd,\theta>0$, then there exists a constant $k>0$ such that
\beq\label{A6}   \mu(|u_t-1|^2) \le k  t^{-\ff 1\theta} \|u_0  \|_\infty^2,\ \ \ t>0.\end{equation}
\end{enumerate}
 \end{thm}

In Section 2, we study nonlinear Neumann problems on $B_n$ whose solution will converge to a distributional solution of \eqref{E1} as $n\to\infty$. In Section 3 and Section 4, we prove
Theorem \ref{TT}  and Theorem \ref{TT2} respectively, which are then applied in Section 5 and Section 6 to non-degenerate and degenerate models, respectively.

 \section{Approximations of \eqref{E1}}

 For any $n\in\mathbb N$, let  $P_t^n$ be the standard Neumann semigroup on $B_n$ generated by $\DD$. Then there exists
$t_n>0$ such that
$$\tt V_{n}:= P_{t_n}^n V\in C^\infty(B_n),\ \ \sup_{B_n}|\tilde{V}_{n}-V|\le \ff 1 n.$$
Let $h\in C^\infty(\R)$ such that $0\le h\le 1, h|_{(-\infty,0]}=1,\ h|_{[1/2,\infty)}=0.$ Define
$$V_n(x):= h(|x|+1-n) V(x)+ \big(1-h(|x|+1-n)\big) \tt V_{n}(x),\ \ \ x\in B_n.$$
Then, letting $N(x):= \ff x {|x|}$ be the outside unit normal vector field of $\pp B_{n}$, we have
\beq\label{VNN}  \beg{split}&V_n\in C^2(B_n),\ \ \ V_n|_{B_{n-1}}= V|_{B_{n-1}},\\
&\|1_{B_n}(V_n-V)\|_\infty\le \ff 1 n,\ \ \  \<N, \nn V_n\>|_{\pp B_n}=0,\ \ n\ge 1.\end{split} \end{equation}

Let
$$\rr_{V_n}(x) :=\ff{\e^{-V_n(x)}}{Z_n}\ \text{for}\ x\in B_n,\ \ \ Z_n:= \int_{B_n}\e^{-V_n(x)}\d x.$$ Then
$$\hat\mu_n(\d x):= 1_{B_n(x)}\rr_{V_n}(x)\d x= \ff 1 {Z_n} 1_{B_n(x)} \e^{-V_n(x)}\d x$$
is a probability measure on $B_n$.
By  \eqref{VNN}, there exists a  sequence $\{s_n \}_{n\ge 1}  $ such that
\begin{align}\label{VRL}  1\le s_n\downarrow 1\ \text{as}\ n\uparrow\infty,\   \  s_n^{-1} \rr_V|_{B_n}\le \rr_{V_n}|_{B_n} \le s_n  \rr_V|_{B_n},\ \ n\ge 1.
\end{align}

Next, for any $n\in\mathbb N$  let $t_n'>0$ be small enough such that
\beq\label{X*} \big\|P_{t_n'}^n u_0- u_0\|_{L^2(B_n)} \le \ff 1 n.\end{equation}
Then
\beq\label{R0} u_{n,0}:=   \ff{P_{t_n'}^n u_0}{\hat\mu_n(P_{t_n'}^nu_0)},\ \ \ \
\rho_{n,0} := \rho_{V_n}  u_{n,0},\ \ \ n\ge 1 \end{equation}
satisfy \beq\label{GLN}\<\nn \rr_{n,0}, N\>|_{\pp B_n}=0,\ \ \   0\le \rho_{n,0} \in C^\infty(B_n),\ \ \ \int_{B_n}\rr_{n,0}(x)\d x= \hat\mu_n(u_{n,0})=1,\end{equation}
and combining \eqref{X*}  with {\bf (A)}, \eqref{VNN} and \eqref{VRL},   we find  a constant $M_0>0$ such that
\begin{equation}\label{intbo} \beg{split}
&\limsup_{n\to\infty} \|u_{n,0}\|_\infty\le \|u_0\|_\infty, \\
& \sup_{n\ge 1} ( \|\rr_{n,0} \|_{\infty} +\|u_{n,0}\|_\infty +\|\rr_{V_n}\|_\infty\big)
  \le M_0,\\
  & \sup_{n\ge 1}  \rho_{n,0} 1_{B_n} \le  M_0  \rr_V  \in L^1(\R^d),\\
  &  \lim_{n\to\infty}  (\rr_{n,0},u_{n,0}) (x)= (\rr_0,u_0)(x),\ \ \text{in }  \ L^2_{loc}(\R^d). \end{split} \end{equation}

By $\phi,\psi \in C^2([0,\infty))$   with $\phi',\psi'\ge 0$, we may choose   $\phi_n,\psi_n\in C_b^\infty(\R)$ such that
\beq\label{PPSN} \phi_n',\psi_n'\ge 0,\ \ \   \lim_{n\to\infty}   \big( \|\phi_n - \phi \|_{C_b^2([0,M_0]) }  + \|\psi_n - \psi \|_{C_b^2([0,M_0]) }   \big)=0.
  \end{equation}
 Let
\beq\label{AN} A_n(\rr) := \phi_n'(\rr) + \psi_n'\left(\rr /\rho_{V_n}\right)+n^{-1},\ \  0\le \rr \in \B(B_n).
\end{equation}
 We  consider the following nonlinear Neumann problem  on $B_n$ with initial value $\rr_{n,0}$:
  \begin{equation}\label{rvr}\beg{split}
&\partial_t \rho_{n,t}= \Div\big\{A_{n}(\rho_{n,t}) \rr_{V_n} \nn u_{n,t}  \big\},\ \ \ u_{n,t}:= \ff{\rr_{n,t}}{\rr_{V_n}},\\
&\qquad \ \ \ \<\nabla\rho_{n,t}, N\>|_{\pp B_n}=0,\ \ \ t\ge 0.\end{split}\end{equation}

 \beg{prp} Assume {\bf (A)}. Then for any $n\ge 1$, the equation $\eqref{rvr}$ has a unique solution in $C^{2,1}(B_n\times [0,\infty)).$
 \end{prp}

 \beg{proof} We first recall a result in \cite{LSU}, then apply to the present setting.

 (a)  Let $\OO\subset \R^d$ be a bounded  smooth open domain, let $T\in (0,\infty)$, and let
$$Q_T:= \OO\times (0,T),\ \ \ S_T:= (\pp\OO)\times [0,T].$$
Let $N$ be the outward unit normal vector field on $\pp \OO$, and for $w\in C^{2,1}(\bar\OO\times [0,T]),$ let
\beg{align*} &\scr L w_t(x):= \pp_t w_t(x)-\sum_{i,j=1}^d a_{ij}(x, w_t(x))\pp_i\pp_j w_t(x) + b(x,w_t(x),\nn w_t(x)),\ \ (x,t)\in Q_T,\\
&\scr L^{(S)} w_t(x) = \big\<a (x, w_t(x)) \nn w_t(x), N(x)\big\>,\ \ \ (x,t)\in S_T,\end{align*}
where
$$a:=(a_{ij})_{1\le i,j\le d}:\ \bar\OO\times\R\to \R^d\otimes\R^d,\ \  b: \bar\OO\times\R\times\R^d\to \R$$
are measurable.
Then, according to (7.1) and (7.2) in  \cite[Chapter V]{LSU} with $\psi=0$, the equation (7.40) in \cite[Chapter V]{LSU} for $\tau=1$, where $``=0$" is missed in the first line,   becomes
\beq\label{L*} \beg{split} &\pp_t w_t (x) =\sum_{i,j=1}^da_{ij} (x, w_t(x)) \pp_i\pp_j w_t(x) - b(x, w_t(x), \nn w_t(x)),\ \ \ (x,t)\in Q_T,\\
&\big\<a (x, w_t(x)) \nn w_t(x), N(x)\big\> =0,\ \ \  (x,t)\in S_T,\\
&w_0(x)=0,\ \ \ x\in \bar\OO.\end{split}\end{equation}
According to  Theorem 7.4  in  \cite[Chapter V]{LSU} with condition (7.34) in (a) for $\psi=0$ therein,  this equation has a unique solution in $C^{2,1}(\bar Q_T)$ provided the following conditions hold (recall
that we set $\psi=0$ therein) for some  constants  $c_1>c_2> 0$ and all $R\in (0,\infty)$:
  \begin{align*}&c_2 |\xi|^2 \le  \big\<a (x,r)\xi,\ \xi\big\>\le c_1|\xi|^2, \ \ x\in \bar{\Omega}, r\in \R,\ \xi\in\R^d,\\
&|b(x,r,\eta)|\leq c_1(1+ |r|+|\eta|^2),\ \ \    x \in \bar{\OO},\ r\in\R,\ \eta\in\R^d,\\
&\sup_{(x,r)\in\bar\OO\times [0,R]} \Big( \| \partial_r a (x,r) \|+\|\nn_x a (x,r)\|
 + \|\nn_x^2 a  (x,r)(x) \| +\|\nn_x  \pp_r a (x,r)\|  \Big)<\infty, \\
&\sup_{(x,r,\eta)\in \bar\OO\times [0,R]\times\R^d} \bigg(\ff{|\nn b(x,r,\cdot)(\eta)| }{1+|\eta|}+\ff{|\pp_rb (x,r,\eta)|}{1+|\eta|^2} \bigg)<\infty, \end{align*}
 and moreover $b(x,r,\eta) $ is H\"older continuous in $x\in \bar\OO$ uniformly in $|r|, |\eta|\le R$.

(b) To apply the above result to the present setting, let  $\OO=B_n\setminus \pp B_n,$  and  let
$$ w_t:= \rr_{n,t}-\rr_{n,0}, \ \ t\ge 0.$$
By \eqref{GLN}, $w_t$ satisfies the Neumann boundary condition if and only if so does $\rr_t$.
Note that
\beq\label{cruci} \nn u_{n,t}= \rr_{V_n}^{-1}\big(\nn\rr_{n,t}+ \rr_{n,t}\nn V_n\big),\ \ t\ge 0,\ n\ge 1.\end{equation}
By straightforward calculations,     \eqref{rvr} is formulated as  \eqref{L*} with
\beg{align*} &a_{ij}(x,r)= \dd_{ij} I_{1,n}(x,r),\ \ \ 1\le i,j\le d,\\
& b(x,r,\eta)=I_{1,n}(x,r) \Big(\big(r+\rr_{n,0}(x)\big) \DD V_n(x) + \big\<\eta+\nn \rr_{n,0}(x), \nn V_n(x)\big\>\Big)\\
&\quad  +\big\<I_{2,n}(x,r,\eta)+ I_{3,n}(x,r,\eta), \ \eta+\nn \rr_{n,0}(x)+\big(r+\rr_{n,0}(x)\big)\nn V_n(x)\big\>,\end{align*}
where
\beg{align*}
& I_{1,n}(x,r):=n^{-1}+\phi_n'\big(r+\rr_{n,0}(x)\big) + \psi_n'\big((r+\rr_{n,0}(x))/\rr_{V_n}(x)\big),\\
& I_{2,n} (x,r,\eta):= \phi_n''\big((r+\rr_{n,0}(x)\big) \big(\eta+\nn \rr_{n,0}(x)\big),\\
& I_{3,n} (x,r,\eta):= \psi_n''\bigg(\ff{r+\rr_{n,0}(x)}{\rr_{V_n}(x)}\bigg)\rr_{V_n}(x)^{-1} \Big( \eta+\nn \rr_{n,0}(x) +\big(r+\eta_{n,0}(x)\big)\nn V_n(x)\Big).\end{align*}
 Since  $0\le \phi'_n, \psi_n' \in C_b^\infty(\R)$ and   $\rr_{n,0}, V_n\in C^2 (B_n)$, the above conditions hold. Then  the proof is finished.
 \end{proof}

  By \eqref{VNN} and the boundary condition in \eqref{rvr}, we have
\beq\label{BD} \<\nabla \rho_{n,t},N\>|_{\pp B_n}= \<\nabla V_n,N\>|_{\pp B_n}=0,\ \ t>0.\end{equation}
Let
 \begin{equation}\label{9}
E_n(t)   :=\hat\mu_n\big(u_{n,t}\log u_{n,t}\big) =\int_{B_n} \left(u_{n,t} \log u_{n,t}    \right) \d \hat\mu_n,\ \ t\ge 0.\end{equation}

\begin{thm}\label{pri} Assume {\bf (A)}. Then the following assertions hold.
\begin{enumerate}
\item[$(1)$] Non-negativity and uniform boundedness:  $0\le u_{n,t}  \le \|u_{n,0}\|_\infty$,  and
\beq\label{LX}  \sup_{n\ge 1, t\ge 0} \big(\|u_{n,t}\|_\infty+\|\rr_{n,t}\|_\infty+\|\rr_{V_n}\|_\infty\big)  \le M_0.\end{equation}
\item[$(2)$] Mass conservation: $\int_{B_n}\rho_{n,t}(x)\d x=\int_{B_n}u_{n,t}\d \hat\mu_n=1,\ t\ge 0,\ n\ge 1.$
\item[$(3)$] Energy estimate: $E_n(t)\ge 0$ and
\begin{align}\label{goo}   \sup_{n\ge 1,t\ge 0} E_n(t) +\sup_{n\ge 1}\int_0^\infty  \d s \int_{B_n}\Big(  A_{n}(\rho_{n,s})\frac{|\nabla u_{n,s} |^2}{u_{n,s}} \Big) \d  \hat\mu_n
<\infty.
\end{align}
 \end{enumerate}
\end{thm}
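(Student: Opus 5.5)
We will work with the classical solution $\rr_{n,t}\in C^{2,1}(B_n\times[0,\infty))$ from the preceding proposition. The main tool is the divergence form of \eqref{rvr} combined with the Neumann condition. By \eqref{BD} and \eqref{cruci}, the flux satisfies $\<A_n(\rr_{n,t})\rr_{V_n}\nn u_{n,t},N\>|_{\pp B_n}=0$.

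\textbf{Mass conservation (2).} Integrate \eqref{rvr} over $B_n$ and apply the divergence theorem. The vanishing boundary flux gives $\frac{\d}{\d t}\int_{B_n}\rr_{n,t}\,\d x=0$, and $\int_{B_n}\rr_{n,0}\,\d x=1$ by \eqref{GLN}. Since $\rr_{n,t}=u_{n,t}\rr_{V_n}$, this is exactly the identity $\hat\mu_n(u_{n,t})=1$.

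\textbf{Bounds (1).} Rewrite the equation for $u:=u_{n,t}$ as a linear parabolic equation with frozen coefficients. Setting $a(x,t):=A_n(\rr_{n,t}(x))\ge n^{-1}$, which is continuous and bounded, the equation reads
$$\rr_{V_n}\pp_t u=\Div(a\,\rr_{V_n}\nn u)=\rr_{V_n}\big(a\DD u+\<\nn a-a\nn V_n,\nn u\>\big).$$
It is uniformly parabolic, with Neumann condition $\<\nn u,N\>=0$ on $\pp B_n$; this follows from \eqref{BD} and \eqref{cruci}.

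\begin{itemize}
\item For the upper bound, the cleanest route is an $L^q$ argument that avoids maximum-principle boundary technicalities. Multiply by $(u-k)^+$ with $k=\|u_{n,0}\|_\infty$ and integrate against $\d x$. Using the zero flux,
$$\frac{\d}{\d t}\tfrac12\hat\mu_n\big(((u-k)^+)^2\big)=-\hat\mu_n\big(a|\nn(u-k)^+|^2\big)\le0.$$
Since $(u_{n,0}-k)^+=0$, we get $u\le k$.
\item Likewise, testing with $u^-$ gives $u\ge0$, using $u_{n,0}\ge0$. One needs $\phi_n',\psi_n'$ defined on all of $\R$ so that $A_n$ makes sense even before positivity is known; this is why they are taken in $C_b^\infty(\R)$.
\end{itemize}

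Then $\|\rr_{n,t}\|_\infty\le\|\rr_{V_n}\|_\infty\|u_{n,0}\|_\infty$. Bounding $\|u_{n,0}\|_\infty$ and $\|\rr_{V_n}\|_\infty$ by constants from \eqref{intbo} yields \eqref{LX}, after possibly enlarging $M_0$ (allowed, since $M_0$ is a generic constant; the product of the two bounds is uniform in $n$).

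\textbf{Entropy (3).} Nonnegativity $E_n(t)\ge0$ follows from Jensen's inequality and $\hat\mu_n(u_{n,t})=1$. For the dissipation we compute
$$\frac{\d}{\d t}E_n(t)=\int_{B_n}(1+\log u)\,\pp_t\rr_{n,t}\,\d x=-\int_{B_n}A_n(\rr_{n,t})\frac{|\nn u|^2}{u}\,\d\hat\mu_n.$$
This uses integration by parts with zero boundary flux and the identity $\rr_{V_n}\d x=\d\hat\mu_n$.

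The main obstacle is that $\log u$ is singular where $u=0$. To handle it, replace $u\log u$ by $(u+\vv)\log(u+\vv)$, compute the derivative of this regularized entropy, integrate over $[0,t]$, and let $\vv\downarrow0$. The regularized dissipation integrand is $a|\nn u|^2/(u+\vv)$, which increases to $a|\nn u|^2/u$ (set to $0$ where $\nn u=0$), so monotone convergence gives
$$E_n(t)+\int_0^t\!\!\int_{B_n}A_n\frac{|\nn u|^2}{u}\,\d\hat\mu_n\,\d s\le E_n(0).$$
The other terms converge by boundedness of $u$.

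Finally,
$$E_n(0)\le\log\|u_{n,0}\|_\infty\le\log M_0,$$
uniformly in $n$, which gives \eqref{goo} after letting $t\to\infty$.
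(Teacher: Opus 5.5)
Your proposal is correct and follows essentially the paper's route: zero-flux integration by parts (via \eqref{BD} and \eqref{cruci}) for mass conservation, testing with $u^-$ for positivity, and an $\varepsilon$-regularized entropy dissipation computation followed by $\varepsilon\downarrow 0$ for \eqref{goo}. The differences are minor. You use the truncation $(u-k)^+$ where the paper uses monotonicity of $\hat\mu_n(u_{n,t}^p)$ and lets $p\to\infty$, and you use $(u+\varepsilon)\log(u+\varepsilon)$ where the paper uses $u\log(u+\varepsilon)$, which gives you an exact identity. Your enlargement of $M_0$ is legitimate, though not because $M_0$ is a generic constant: it can be chosen that large already in \eqref{intbo}, before $\phi_n,\psi_n$ are fixed in \eqref{PPSN}.
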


\begin{proof}(1)  We first prove $u_{n,t}\ge 0$.
 Let $u_{n,t}^{-}:=\max\{0, - u_{n,t}\}$. We have
$$\pp_t \big(u_{n,t}^-\big)^2  = - 2u_{n,t}^-  \ff{\pp_t \rr_{n,t}}{\rr_{V_n}}=-2u_{n,t} ^{-}\left[\Div(A_n(\rho_{n,t})\nabla \rho_{n,t}) + \Div(A_n(\rho_{n,t})\rho_{n,t}\nabla V_n)\right]$$ and a.e.
$$\nn u_{n,t}^{-} = - 1_{\{u_{n,t} <0\}} \rr_{V_n}^{-1} \big(\nn\rr_{n,t} + \rr_{n,t} \nn V_n\big).  $$
By combining these with   the dominated convergence theorem, the integration by parts formula, \eqref{cruci},    \eqref{BD} and $A_n(\rr_{n,t})\ge 0$,  we derive
 \begin{align*}
& \frac{1}{2}\partial_t\int_{B_n}\Big(\left(u_{n,t}^{-}\right)^2\rho_{V_n}\Big)(x) \d x=  \frac{1}{2} \int_{B_n}\Big(\pp_t \left(u_{n,t}^{-}\right)^2\rho_{V_n}\Big)(x) \d x\\
&=   -  \int_{B_n}\Big(u_{n,t} ^{-} \Div\big\{A_n(\rho_{n,t})(\nabla \rho_{n,t}  +  \rho_{n,t}\nabla V_n)\big\} \Big)(x) \d x  \\
&=   \int_{B_n}A_n(\rho_{n,t})\left\<\nabla u_{n,t}^{-}, \nabla \rho_{n,t}+\rho_{n,t}\nabla V_n)\right\>\d x  \\
&\quad  -  \int_{\partial B_n} u_{n,t}^{-}A_n(\rho_{n,t})\big\< \nabla \rho_{n,t}+ \rho_{n,t}\nabla V_n, N\big\>\d S \\
&= -\int_{B_n}\Big(A_n(\rho_{n,t})\left|\nabla u_{n,t}^- \right|^2\rho_{V_n}\Big)(x)\d x\leq 0.
\end{align*} Since $u_{n,0}^{-}=0$, we obtain  $u_{n,t}^{-}=0$, so that $u_{n,t}\ge 0$ for $t>0$.

Similarly, by \eqref{rvr},  the dominated convergence, the integration by parts formula, \eqref{BD}  and   \eqref{cruci}, for any $p>1$ we have
\beq\label{ERG}   \begin{split}
&\partial_t \int_{B_n}\big(u_{n,t}^p\rho_{V_n}\big)(x)\d x\\
&= p\int_{B_n}\Big(\left(u_{n,t}\right)^{p-1}\left(\Div(A_n(\rho_{n,t})\nabla \rho_{n,t}) + \Div(A_n(\rho_{n,t})\rho_{n,t}\nabla V_n)\right)\Big)(x)\d x\\
&= -p(p-1)\int_{B_n}\Big(u_{n,t}^{p-2}A_n(\rho_{n,t})\left\<\nabla u_{n,t}, \nabla \rho_{n,t}+\rho_{n,t}\nabla V_n)\right\>\Big)(x) \d x\\
&+p\int_{\partial B_n}u_{n,t}^{p-1}A_n(\rho_{n,t})\big\< \nabla \rho_{n,t}+ \rho_{n,t}\nabla V_n, N\big\>\d S\\
&= -p(p-1)\int_{B_n}\Big(u_{n,t}^{p-2}A_n(\rho_{n,t})\left|\nabla u_{n,t} \right|^2\rho_{V_n}\Big)(x) \d x\leq 0.
\end{split} \end{equation}
This means $$\int_{B_n}u_{n,t}^p\rho_{V_n}\d x\leq \int_{B_n}u_{n,0}^p\rho_{V_n}\d x,\ \ \ p>1.$$
Consequently,  $\|u_{n,t} \|_{\infty}\leq \|u_{n,0} \|_\infty$, which implies    \eqref{LX} by  \eqref{intbo}.

(2) Similarly to (1), by the integration by parts formula, \eqref{BD} and \eqref{cruci}, we obtain
\begin{align*}
&\partial_t \int_{B_n}\rho_{n,t}(x)\d x\\
&= \int_{B_n}\left(\Div(A_n(\rho_{n,t})\nabla \rho_{n,t}) + \Div(A_n(\rho_{n,t})\rho_{n,t}\nabla V_n)\right)(x)\d x=0.
\end{align*}  This and \eqref{GLN} implies
$$\int_{B_n} \rr_{n,t}(x)\d x=\int_{B_n} u_{n,t}\d\hat\mu_n=1,\ \ \ t\ge 0.$$

(3)  By  $0\le u_{t,n}\le \|u_0\|_\infty$, $\hat\mu_n(u_{n,t})=1$  and that $\hat\mu_n $ is a probability measure on $B_n$, we obtain $E_n(t)\ge 0$ and
\beq\label{LQ} \sup_{n\ge 1, t\ge 0} E_n(t) \le \max\big\{\e,  \|u_0\|_\infty\log \|u_0\|_\infty\big\}.\end{equation}
Next, for any $\vv>0$, let
$$E_{n,\vv}(t) := \int_{B_n} \left(u_{n,t} \log(\vv+ u_{n,t} )   \right)(x)\rho_{V_n}(x) \d x.$$
By  \eqref{rvr},  the integrating by parts formula, \eqref{BD} and \eqref{cruci}, we derive
\beq\label{EN*} \begin{split}
\frac{\d E_{n,\vv}(t)}{\d t}&=\int_{B_n} \Big(\big[\log(\vv+ u_{n,t})\big] \Div\big\{A_n(\rho_{n,t})\big(\nabla \rho_{n,t}+  \rho_{n,t}\nabla V_n\big)\big\}  \Big)(x)\d x\\
& {+\int_{B_n} \left(\left[\frac{u_{n,t}}{\vv+ u_{n,t}}\right] \Div\big\{A_n(\rho_{n,t})\big(\nabla \rho_{n,t}+  \rho_{n,t}\nabla V_n\big)\big\}  \right)(x)\d x}\\
&=-\int_{B_n} \left(\frac{A_n(\rho_{n,t})}{\vv+u_{n,t}}  \left\<\nabla u_{n,t}, \nabla \rho_{n,t}+\rho_{n,t}\nabla V_n\right\>\right)(x)\d x\\
& {-\int_{B_n} \left(\vv\frac{A_n(\rho_{n,t})}{(\vv+u_{n,t})^2}  \left\<\nabla u_{n,t}, \nabla \rho_{n,t}+\rho_{n,t}\nabla V_n\right\>\right)(x)\d x}\\
&\leq -\int_{B_n}  \Big(A_n(\rho_{n,t})\frac{|\nabla u_{n,t}|^2}{\vv+u_{n,t}}\Big) \d \hat\mu_n.\end{split}\end{equation}
So,
 {$$\int_0^t\int_{B_n} \Big(A_n(\rho_{n,s})\frac{|\nabla u_{n,s}|^2}{\vv+u_{n,s}} \Big) \d \hat\mu_n\d s \le E_{n,\vv}(0)- E_{n,\vv}(t).$$}
Letting $\vv\to 0$ and  combining with \eqref{LQ}, we
 derive  \eqref{goo}.
\end{proof}

\section{Proof of Theorem \ref{TT}}

For any $p\in (0,\infty)$,  let $H^p(B_n)$ be the closure of functions $f\in C_0^\infty(B_n\setminus \pp B_n)$   under the Sobolev norm
$$\|f\|_{H^p(B_n)}:= \|(-\DD)^{\ff p 2}f\|_{L^2(B_n)},$$
where $\DD$ is the Dirichlet Laplacian on $B_n$. By the Sobolev embedding theorem, for $p\in (\ff d 2 \lor 1,\infty)$ there exists a constant $C_{n,p}\in (0,\infty)$ such that
\beq\label{SB*} \|\nn f\|_{L^2(B_n)} + \|f\|_{L^\infty (B_n)}\le C_{n,p} \|f\|_{H^p(B_n)},\ \ \ f\in H^{p}(B_n).\end{equation}
Let $(H^{-p}(B_n), \|\cdot\|_{H^{-p}(B_n)})$ be the dual space of $(H^p(B_n), \|\cdot\|_{H^p(B_n)})$ with respect to $L^2(B_n).$

Moreover, let $W^{1,2} (B_n)$ be the closure of $C_b^\infty(B_n)$ under the Sobolev norm
$$\|f\|_{W^{1,2}(B_n)}:= \|f\|_{L^2(B_n)} + \|\nn f\|_{L^2(B_n)},$$
and let $W_{loc}^{1,2}(\R^d)$ be the class of functions $f$ on $\R^d$ such that $f|_{B_n}\in W^{1,2}(B_n)$ for any $n\ge 1.$

 To show that $\rr_{n_k,t}\to \rr_t$ weakly in $L^2_{loc}(\R^d)$ as $n_k\to\infty$ for a subsequence $n_k$, and the limit is a distributional solution of \eqref{E1},  we present some lemmas. Let
\beg{align*}
 &Z_{n,t} := \psi_n(u_{n,t}), \qquad \ \ Y_{n,t} := \phi_n(\rho_{n,t}),\ \ \ n\ge 1,\ t\ge 0.
\end{align*}

\beg{lem}\label{LK1} Assume {\bf (A)}.There exists  $K: \mathbb N\to (0,\infty)$ such that
 {\beq\label{GB1}\begin{split} &\sup_{t\ge 0, n\ge m}\Big(\int_{B_m}\big(Z_{n,t}^2+Y_{n,t}^2\big)\d x \\
&  \qquad +\ff 1 {1+t} \int_0^t \d s \int_{B_m} \big(|\nabla Z_{n,s}|^2 + |\nabla Y_{n,s} |^2\big)(x)\d x \Big) \le K_m ,\ \    m\ge 1,\end{split}\end{equation}}
\beq\label{GB*}  {\sup_{ n\ge m} \int_0^\infty\big(\|\pp_s\rr_{n,s}\|^2_{H^{-1}(B_m)}+\|\pp_su_{n,s}\|^2_{H^{-1}(B_m)}\big)\d s\le K_m,\ \ \  m\ge 1.}
\end{equation}
\end{lem}
\beg{proof}  By \eqref{LX} and
 \eqref{PPSN}, we obtain
 \beq\label{KV6}  \sup_{n\ge 1,t\ge 0}\big(\|Z_{n,t}\|_\infty+ \|Y_{n,t}\|_\infty\big)<\infty.\end{equation}
By  \eqref{PPSN}, \eqref{AN} and \eqref{LX}, we find a constant $c_1>0$ such that
\beq\label{GB0} A_n(\rr_{n,t}) \ff{|\nn u_{n,t}|^2}{u_{n,t}}\ge \psi_n'(u_{n,t}) \ff{|\nn u_{n,t}|^2}{u_{n,t}}= \ff{|\nn\psi_n(u_{n,t})|^2}{u_{n,t}\psi'_n(u_{n,t})}\ge c_1 |\nn\psi_n(u_{n,t})|^2.\end{equation}
Moreover, by  \eqref{AN} and \eqref{cruci},
\beg{align*} &A_n(\rr_{n,t}) \ff{|\nn u_{n,t}|^2}{u_{n,t}}\ge \phi_n'(\rr_{n,t}) \ff{|\nn\rr_{n,t}+ \rr_{n,t}\nn V_n|^2}{\rr_{V_n}\rr_{n,t}}\\
&\ge  \ff{ \phi_n'(\rr_{n,t}) } {\rr_{V_n}\rr_{n,t}}   \bigg( \ff 12 |\nn\rr_{n,t}|^2  -   3|\rr_{n,t}\nn V_n|^2 \bigg)\\
&=    \ff { |\nn \phi_n(\rr_{n,t})|^2} {2 \phi_n'(\rr_{n,t}) \rr_{V_n}\rr_{n,t}} -   \ff{3 \phi_n'(\rr_{n,t})\rr_{n,t}|\nn V_n|^2}{\rr_{V_n} }.\end{align*}
Combining this with \eqref{PPSN} and \eqref{LX},  and
 \beq\label{RMM} \sup_{n\ge m}   \|1_{B_m} (|V_n|+|\nn V_n|)\|_\infty<\infty\end{equation} due to \eqref{VNN},    we find a constant $c_2>0$ and a map $k: \mathbb N\to (0,\infty)$ such that
$$|\nn \phi_n(\rr_{n,t})|^2\big|_{B_m}  \le  c_2  A_n(\rr_{n,t}) \ff{|\nn u_{n,t}|^2}{u_{n,t}}\bigg|_{B_m} + k_m, \ \ n\ge m\ge 1.$$
Combining this with \eqref{GB0}, we  find a constant $c_3>0$ such that
  \beg{align*} &\big(|\nn Z_{n,t}|^2+|\nn Y_{n,t}|^2\big)\big|_{B_m}=  \big(|  \nn \psi_n(u_{n,t})|^2+     |   \nn \phi_n(\rr_{n,t})|^2 \big)\big|_{B_m}\\
& \le c_3   A_n(\rr_{n,t}) \ff{|\nn u_{n,t}|^2}{u_{n,t}}\bigg|_{B_m} +k_m,\ \ \ n\ge m\ge 1,\ t\ge 0.\end{align*}
Combining this with      \eqref{goo} and \eqref{KV6},   we find an increasing function $K: \mathbb N\to (0,\infty)$ such that \eqref{GB1} holds.

Next, by \eqref{rvr}, \eqref{cruci}   and \eqref{LX}, we find a constant $c(m)>0$ such that for any $f\in H^1(B_m)$ and $n\ge m$, we have
\beg{align*} &\bigg|\int_{B_m} \big(f\pp_t\rr_{n,t} \big)(x)\d x\bigg| + \bigg|\int_{B_m} \big(f\pp_tu_{n,t} \big)(x)\d x\bigg| \\
&= \bigg|\int_{B_m}\Big(\rr_{V_n} A_n(\rr_{n,t}) \big\<\nn f,  \nn u_{n,t}\big\>\Big)(x)\d x\bigg|\\
&\quad+
\bigg|\int_{B_m}\Big(\rr_{V_n} A_n(\rr_{n,t}) \big\<\nn(\rr_{V_n}^{-1} f),  \nn u_{n,t}\big\>\Big)(x)\d x\bigg|\\
&\le   c(m) \|f\|_{H^1(B_m)} \bigg(\int_{B_m} \Big(
A_n(\rr_{n,t}) \ff{|\nn u_{n,t}|^2}{u_{n,t}} \Big)(x)\d x\bigg)^{\ff 1 2},\end{align*}
Combining this with      \eqref{goo},   we find an increasing function $K: \mathbb N\to (0,\infty)$ such that \eqref{GB*} holds.
\end{proof}

\beg{lem}\label{LK2} Assume {\bf (A)}  and let $p\in (\ff d 2\lor 1,\infty)$. Then  there exists   $K: \mathbb N\to (0,\infty)$ such that
   \beq\label{GB2}     \sup_{t\ge 0, n\ge m} \frac{1}{1+t} \int_0^t\big(\|\pp_s Y_{n,s}\|_{H^{-p}(B_m)} +  \|\pp_s Z_{n,s}\|_{H^{-p}(B_m)}\big)\d s \le K_m,\ \ \ m\ge 1. \end{equation}
   \end{lem}

   \beg{proof}
 By  \eqref{rvr} and \eqref{cruci}, we have
 $$\partial_t Z_{n,t}= \psi_n'(u_{n,t})  \frac{\partial_t \rho_{n,t}}{\rho_{V_n}}= \ff{ \psi_n'(u_{n,t})}{\rr_{V_n}} {\rm div}\Big[ A_n(\rr_{n,t}) \rr_{V_n} \nn u_{n,t}\Big].$$
 So, for any $f\in H^p(B_m)$,
 \beg{align*}
& \bigg|  \int_{B_m} \big(f  \partial_t Z_{n,t}\big)(x)\d x\bigg|
 = \bigg|  \int_{B_m} \Big(A_n(\rr_{n,t}) \Big\<\rr_{V_n} \nn u_{n,t},\ \nn \big\{f \rr_{V_n}^{-1} {\psi_n'(u_{n,t}) } \big\}\Big\>   \Big)(x)\d x\bigg| \\
&\le  \int_{B_m}  \Big(A_n(\rho_{n,t})|\nn u_{n,t}|  |\nn f|  {\psi_n'(u_{n,t}) }\Big)(x) \d x\\
&\quad  + \int_{B_m}  \Big[A_n(\rho_{n,t})|f \nn u_{n,t}|   \Big(\big|\nn {\psi_n'(u_{n,t}) }\big|  +   {\psi_n'(u_{n,t}) } |\nn V_n| \Big) \Big](x)  \,\d x\\
&\le  \|\nn f\|_{L^2(B_m)} \bigg(\int_{B_m} \Big(A_n(\rho_{n,t}) \psi_n'(u_{n,t})  |\nn u_{n,t}| \Big)^2(x)\d x\bigg)^{\ff 1 2}\\
&\quad + \|f\|_{L^\infty(B_m)}  \int_{B_m}  \Big[A_n(\rho_{n,t})| \nn u_{n,t}|^2  |\psi_n''(u_{n,t})|    +   {\psi_n'(u_{n,t}) } |\nn V_n| \Big) \Big](x)  \,dx.\end{align*}
By \eqref{VNN}, \eqref{PPSN}  and \eqref{LX}, we find a constant $k_m  \in (0,\infty)$ such that
\beg{align*} &\Big(A_n(\rho_{n,t}) \psi_n'(u_{n,t})  |\nn u_{n,t}| \Big)^2 +A_n(\rho_{n,t})| \nn u_{n,t}|^2  |\psi_n''(u_{n,t})| \le k_m A_n(\rho_{n,t})\rr_{V_n} \ff{| \nn u_{n,t}|^2}{u_{n,t}},\\
& \psi_n'(u_{n,t})  |\nn V_n| \le k_m,\ \ \ t\ge 0,\ n\ge m\end{align*}
hold on $B_m$.
Combining these with   \eqref{goo} and \eqref{SB*},  we   obtain
$$\sup_{t\ge 0,n\ge 1} \|\pp_t Z_{n,t}\|_{H^{-p}(B_m)}<\infty.$$
Similarly, we can prove the same property for $ \|\pp_t Y_{n,t}\|_{H^{-p}(B_m)}$.   So,   \eqref{GB2} holds for some constant $K_m>0.$
  \end{proof}

 \beg{proof}[Proof of Theorem \ref{TT}]   We first construct $\rr$, then verify conditions  in Definition \ref{D1.2}.

 {\bf  (1)}  Noting that $W^{1,2}(B_n)$ is compactly embedded into $L^2(B_n)$, and  $L^2(B_n)$ is continuously embedded into $H^{-p}(B_n)$,
 by    the Aubin-Lions lemma,
     \eqref{GB1} and \eqref{GB2} imply that the sequence $(Z_{l,\cdot}, Y_{l,\cdot})_{l\ge n}$ is relatively compact in $L^2_{loc}([0,\infty)\times B_n).$
 So,     for any $n\ge 1$, we find a  sequence $n\le n_k\uparrow \infty$ as $k\uparrow\infty$, and
$Z^{(n)},Y^{(n)} \in L_{loc}^2([0,\infty)\times B_n)$, such that
$$ \lim_{k\to\infty}  (Z_{n_k,\cdot}, Y_{n_k,\cdot} ) = (Z^{(n)},Y^{(n)}) \quad \text{  in\  } L_{loc}^2([0,\infty)\times B_n).$$
 Then, up to a subsequence,
 \beq\label{GO0}  \lim_{k\to\infty}  (Z_{n_k,\cdot}, Y_{n_k,\cdot} ) = (Z^{(n)},Y^{(n)}) \quad \text{ a.e.  \ on\ } [0,\infty)\times B_n.\end{equation}

By  \eqref{LX} and \eqref{GB*},
   $\{\rr_{n_k,\cdot}, u_{n_k,\cdot}\}_{k\ge 1}\subset C([0,\infty); H^{-1}(B_n))$ are equicontinuous.
Since $  L^2(B_n)$    is compactly embedded into $H^{-1}(B_n)$,
by \eqref{LX} and the Arzel\'a-Ascoli theorem,
there exist   $\rr^{(n)}, u^{(n)}\in C([0,\infty); H^{-1}(B_n))\cap \B_b([0,\infty)\times\R^d)$ such that, up to a subsequence,
$$\lim_{k\to\infty} \sup_{t\in [0,n]}   \big(\|\rr_{n_k,t}- \rr_t^{(n)}\|_{H^{-1}(B_n)}+  \|u_{n_k,t}- u_t^{(n)}\|_{H^{-1}(B_n)}\big)=0.$$
Combining this   with \eqref{LX}     we obtain
\beq\label{GO2}  \lim_{k\to\infty} (\rr_{n_k,t},  u_{n_k,t})= (\rr_t^{(n)}, u_t^{(n)}) \quad \text{ weakly\  in }\  L^2(B_n),\ \  \ t\ge 0.\end{equation}
Since   $(1_{B_n}\rr_{n,0},1_{B_n}u_{n,0})\to (\rr_0,u_0)$ in $L^2_{loc}(\R^d)$ due to \eqref{intbo}, this implies
\beq\label{X**} (\rr^{(n)}_0, u^{(n)}_0) = (\rr_0,u_0)|_{B_n}.\end{equation}
Moreover,
since  $u_{n_k,t}=\rr_{n_k,t}/\rr_{V_{n_k}}$,  by \eqref{VRL} and  \eqref{GO2} we obtain
\beq\label{GO3} u_t^{(n)}=\rr_t^{(n)}/ \rr_{V},\ \ \ t\ge 0, \ n\ge 1,\end{equation}
while Theorem \ref{pri}(1) and \eqref{GO2}   imply
\beq\label{GO4}  \sup_{t\ge 0,n\ge 1} \|u_{n,t}\|_\infty \le \|u_0\|_\infty,\ \ \  \sup_{t\ge 0,n\ge 1} \|\rr_{t,n}\|_\infty \le \|u_0\|_\infty\|\rr_V\|_\infty.   \end{equation}

 By taking $\{(n+1)_k\}_{k\ge 1}$ as a subsequence of $\{n_k\}_{k\ge 1}$ for given $n\ge 1$, we have
 $$(Z^{(n)},Y^{(n)})|_{[0,\infty)\times B_n}=(Z^{(n+1)},Y^{(n+1)})|_{[0,\infty)\times B_n}\ \text{a.e.\ on} \ [0,\infty)\times B_n,$$
 and $\rr_t^{(n)} = \rr_t^{(n+1)}|_{B_n}$ for  $  t\ge 0$. So,
  $$(Y,Z,\rr,u) :=\sum_{n=1}^\infty 1_{B_n\setminus B_{n-1}} (Z^{(n)},Y^{(n)}, \rr^{(n)}, u^{(n)}),\ \  B_0:=\emptyset$$
  satisfies
\beg{align*} &(Y,Z)|_{B_n} =  (Y^{(n)}, Z^{(n)})\ \ \text{a.e.\ on} \ [0,\infty)\times B_n,\ n\ge 1,\\
  &(\rr_t,u_t)|_{B_n}= (\rr_t^{(n)}, u_t^{(n)}),\ \ \text{a.e.\ on} \  B_n,\ n\ge 1,\ t\ge 0,\end{align*}
  where the initial value  of $(\rr, u)$ coincides with that in \eqref{E1}.

 By \eqref{GO0}-\eqref{GO4} and    the diagonal principle, we
find a common subsequence, denoted again  by $\{n_k\}_{k\ge 1} $,   such that
 \beq\label{z00}  \lim_{k\to\infty}  (1_{B_{n_k}}Z_{n_k,\cdot}, 1_{B_{n_k}}Y_{n_k,\cdot} ) = (Z,Y) \quad \text{ a.e.  \ on\ } [0,\infty)\times\R^d.\end{equation}
\beq\label{GB3}   \lim_{k\to\infty} \big(1_{B_{n_k}}\rr_{n_k,t}, 1_{B_{n_k}}u_{n_k,t}) = (\rr_t, u_t)\  \quad \text{weakly in }\  L^2_{loc}(\R^d),\ \  \ t\ge 0,\end{equation}
 \beq\label{GB4}   u=\ff{\rr}{\rr_V}, \ \ \sup_{t\ge 0} \|u_t\|_\infty \le \|u_0\|_\infty,\ \ \sup_{t\ge 0}\|\rr_t\|_\infty\le \|u_0\|_\infty \|\rr_V\|_\infty.\end{equation}

{\bf (2)}   We intend to prove that $\rr_t\in \D_1^+$ for $t\ge 0$, which follows from $\rr_{n_k,t}\in\D_1^+$ due to Theorem \ref{pri}, and the following formula:
\beq\label{W*} \lim_{k\to\infty} \int_{B_{n_k}} (f\rr_{n_k,t})(x)\d x =   \int_{\R^d} (f\rr_t)(x)\d x,\ \ \ f\in \B_b(\R^d).\end{equation}
By \eqref{VRL}, \eqref{GB4} and
 $$u_{n_k,t}= \rr_{n_k, t}/\rr_{V_{n_k}}, \ \ \hat\mu_{n_k}= (1_{B_{n_k}} \rr_{V_{n_k}})(x)\d x,\ \ \mu(\d x)= \rr_V(x)\d x,$$
\eqref{W*} is equivalent to
 \beq\label{W**} \lim_{k\to\infty} \int_{B_{n_k}} (fu_{n_k,t}) \d\hat\mu_{n_k}  =   \int_{\R^d} (fu_t) \d \mu,\ \ \ f\in \B_b(\R^d).\end{equation}
It suffices to prove \eqref{W*} for $0\le f\in \B_b(\R^d)$.

Let $0\le f\in \B_b(\R^d)$. By Theorem \ref{pri}(1) and \eqref{GB3}, we have $f\rr_t\ge 0$ and
\beg{align*}& \int_{\R^d}(f\rr_t)(x)\d x=\lim_{m\to\infty} \int_{B_m} (f\rr_t)(x)\d x = \lim_{m\to\infty}  \lim_{k\to\infty}\int_{B_m}(f\rr_{n_k,t})(x)\d x\\
&  \le   \liminf_{k\to\infty}\int_{B_{n_k}}(f\rr_{n_k,t})(x)\d x. \end{align*}
   On the other hand, by Theorem \ref{pri}(1) and \eqref{VRL}, we find a constant $c>0$ such that
   $$\rr_{n_k,t}= \ u_{n_k,t} \rr_{V_{n_k}}\le c \rr_V\in L^1(\R^d),\ \ t\ge 0,\ k\ge 1.$$
    So,  by Theorem \ref{pri}(2)  and \eqref{GB3}, for any $m\ge 1$ we have
  \beg{align*} &\int_{B_m}(f\rr_t)(x)\d x =\lim_{k\to\infty}\int_{B_m} (f\rr_{n_k,t})(x)\d x \\
  &\ge \limsup_{k\to\infty} \int_{B_{n_k}} (f\rr_{n_k,t})(x)\d x - c \|f\|_\infty \int_{B_m^c}\rr_V(x)\d x.
   \end{align*}
  Letting $m\to\infty$ we derive
  $$ \int_{\R^d}(f\rr_t)(x)\d x  \ge \limsup_{k\to\infty} \int_{B_{n_k}} (f\rr_{n_k,t})(x)\d x.$$
  Therefore, \eqref{W*} holds.

{\bf (3) }     We claim that
\beq\label{GB40}  \lim_{k\to\infty} (1_{B_{n_k}}\rr_{n_k,t}(x), 1_{B_{n_k}}u_{n_k,t}(x)) =(\rr_t(x), u_t(x)),  \ \ \text{a.e.} \ (t,x) \in [0,\infty)\times \R^d.\end{equation}
 Without loss of generality, we assume that $\phi'|_{(0,\infty)}>0$ so that   $\phi$ is continuous and strictly increasing on $[0,\infty)$.

By  $ Y_{n,t} = \phi(\rr_{n,t}), $    \eqref{GO0} and \eqref{PPSN},  we obtain that   up to a subsequence,
$$ Y_t  (x)= \lim_{k\to\infty}   \phi_{n_k}(\rr_{n_k,t}) (x)= \lim_{k\to\infty}   \phi (\rr_{n_k,t}) (x),\ \ \text{a.e.}\ (t,x)\in [0,\infty)\times\R^d.$$
Since $\phi$ is continuous and strictly increasing, this  is equivalent to
$$ \phi^{-1}(Y_t(x) )   = \lim_{k\to\infty}   \rr_{n_k,t}  (x),\ \ \text{a.e.}\ (t,x)\in [0,\infty)\times\R^d,$$
which together with \eqref{LX} and \eqref{GB3} implies
$$  \phi^{-1}(Y_t (x)) =   \rr_t (x)\ \ \text{a.e.}\ (t,x)\in [0,\infty)\times\R^d,$$so that
$$\lim_{k\to\infty} \rr_{n_k,t}(x)=   \rr_t(x), \ \ \text{a.e.}\ (t,x)\in [0,\infty)\times\R^d.$$
Combining this with   $u_{n_k,t}= \rr_{n_k,t}/\rr_{V_{n_k}}$ and $u_t=\rr_t/\rr_V$ due to \eqref{GB4},
we obtain \eqref{GB40}.

 {\bf (4)}    For any $f\in C_0^\infty(\R^d)$, we find $m\ge 1$ such that $ \mathrm{supp} f\subset  B_m$. Then
   for $n_k\ge m+1$ such that $V|B_{m}= V_{n_k}|B_m$ due to \eqref{VNN},
 \eqref{rvr} and \eqref{AN} imply
  \begin{align*}
&\int_{\R^d}\rho_{n_k,t}(x)f(x)\d x-\int_{\R^d}\rho_{n_k,0}(x)f(x)\d x\\
& =-\int_0^t\d s \int_{B_{n_k}}\Big(A_{n_k}(\rho_{n_k,s})
\< \nabla\rho_{n_k,s} + \rho_{n_k,s}\nabla V, \nabla f\>\Big)(x) \d x \\
&=- \int_0^t\d s \int_{B_{n_k}}\Big(\ff 1 {n_k} \rr_{V_{n_k} } \nn u_{n_k,s},\ \nn f\big\> \Big)(x)\d x\\
&\quad  -\int_0^t \d s\int_{\R^d}  \big\<\rr_{V_{n_k}} {\nn \psi_{n_k}}(u_{n_k,s})+  {\nn \phi_{n_k}(\rr_{n_k,s}} )+ \rr_{n_k,s} {\phi_{n_k}'}(\rr_{n_k,s})    \nn V, \ \nn f \big\>(x)   \d x.\end{align*}
By integration by parts formula, we have
\beg{align*} &- \int_{\R^d}  \big\<\rr_{V_{n_k}}\nn \psi_{n_k}(u_{n_k,s})+ \nn \phi_{n_k}(\rr_{n_k,s} )+ \rr_{n_k,s}\phi_{n_k}'(\rr_{n_k,s})    \nn V, \ \nn f \big\>(x)   \d x\\
&= \int_{\R^d} \Big(\big(\phi_{n_k}(\rr_{n_k,s}) +\rr_{V_{n_k}}\psi_{n_k}(u_{n_k,s})   \big)  \DD f\\
 &\qquad\quad-   \big(\rr_s\phi_{n_k}'(\rr_{n_k,s}) +\psi_{n_k}(u_{n_k,s})  \rr_{V_{n_k}}\big) \<\nn V,\nn f\>  \big)\Big)(x) \d x.\end{align*}
So,
\beq\label{LSS} \begin{split}
&\int_{\R^d}\rho_{n_k,t}(x)f(x)\d x-\int_{\R^d}\rho_{n_k,0}(x)f(x)\d x\\
&=- \int_0^t\d s \int_{B_{n_k}}\Big(\ff 1 {n_k} \rr_{V_{n_k} } \nn u_{n_k,s},\ \nn f\big\> \Big)(x)\d x\\
&\quad + \int_0^t \d s\int_{\R^d} \Big(\big(\phi_{n_k}(\rr_{n_k,s}) +\rr_{V_{n_k}}\psi_{n_k}(u_{n_k,s})   \big)  \DD f \Big)(x)\d x\\
&\quad -   \int_0^t \d s\int_{\R^d} \Big(\big(\rr_s\phi_{n_k}'(\rr_{n_k,s}) +\psi_{n_k}(u_{n_k,s})  \rr_{V_{n_k}}\big) \<\nn V,\nn f\>  \big)\Big)(x) \d x.
 \end{split}\end{equation}
 Noting that \eqref{AN}     implies
 \beg{align*} &\int_0^t\d s \int_{B_{n_k}} \ff 1 {n_k} (\rr_{V_{n_k} } |\nn u_{n_k,s}  |)(x)\d x= \int_0^t\d s \int_{B_{n_k}} \ff 1 {n_k} (\rr_{V_{n_k} } |\nn u_{n_k,s}  |)(x)\d x\\
 &\le \bigg(\int_0^t\d s \int_{B_{n_k}} \ff 1 {n_k}     u_{n_k,s}  \d \hat \mu_{n_k} \bigg)^{\ff 1 2}\bigg(\int_0^t\d s \int_{B_{n_k}}
    \Big( A_{n_k}(\rr_{n_k,s} ) \ff{  |\nn u_{n_k,s}  |^2}{u_{n_k,s}} \Big) \d \hat\mu_{n_k}  \bigg)^{\ff 1 2}, \end{align*}
     which goes to $0$ as $k\to\infty$ according to \eqref{LX} and   \eqref{goo}.
By combining this with \eqref{VRL}, \eqref{PPSN},  \eqref{LX}, \eqref{GB40}  and  the dominated convergence theorem,     we may  let    $k\to\infty$ in \eqref{LSS} to derive
\begin{align*}
&\int_ {\R^d}\rho_{t}(x)f(x)\d x - \int_{B_m}\rho_{0}(x)f(x)\d x\\
&=    \int_0^t \d s\int_{\R^d} \Big(\big(\phi(\rr_s) +\rr_V\psi(u_s)   \big)  \DD f -   \big(\rr_s\phi'(\rr_s) +\psi(u_s)  \rr_V\big) \<\nn V,\nn f\>  \big)\Big)(x) \d x.\end{align*}

\end{proof}

\section{Proof of Theorem \ref{TT2} }
 {Recall that $\mu_n$   is defined in \eqref{mun}. To apply  the functional inequalities \eqref{PC10} and \eqref{A4} for $\mu_n$, we  normalize $u_{n,t}$ under $\mu_n$ as follows:
  $$f_{n,t}:= \ff{u_{n,t}}{\theta_{n,t}},\ \ \  \theta_{n,t}:= \mu_n\big(u_{n,t}\big).$$
  By   \eqref{VRL} and $\hat\mu_n(u_{n,t})=1$, we have
\beq\label{H1}  s_n^{-1} \mu_n\le \hat\mu_n\le s_n \mu_n, \ \ \ s_n^{-1} \le \theta_{n,t}\le s_n,\ \ \ \mu_n(f_{n,t})=\hat\mu_n(u_{n,t})=1,\ \ t\ge 0,  n\ge 1.\end{equation}
Combining this with \eqref{GG} and  \eqref{VRL}, we find $1<\gg_n\downarrow 1$ as $n\uparrow\infty$ such that
\beq\label{H2} \phi'(f_{n,t}\rr_V) +\psi'(f_{n,t})\le \gg_n \big(\phi'(\rr_{n,t} ) +\psi'(u_{n,t})\big),\ \ n\ge 1,\ t\ge 0.\end{equation}
 {Indeed,   \eqref{GG} implies
\begin{align*}
\phi'(f_{n,t}\rr_V)=\phi'\Big(\ff{\rho_{n,t}}{\theta_{n,t}}\frac{\rr_V}{\rho_{V_n}}\Big)\leq \sup_{s\in [s_n^{-2}, s_n^2]}\phi'(s\rho_{n,t})\leq \gamma(s_n^2)\phi'(\rho_{n,t}),
\end{align*}
\begin{align*}
\psi'(f_{n,t})=\psi'\Big(\ff{u_{n,t}}{\theta_{n,t}}\Big)\leq \sup_{s\in [s_n^{-1}, s_n]}\psi'(su_{n,t})\leq \gamma(s_n)\psi'(u_{n,t}).
\end{align*}
So, \eqref{H2} holds for $\gamma_n=\max\big\{\gamma(s_n),\gamma(s_n^2)\big\}$ which goes to $1$ as $n\to\infty$.

The first inequality in \eqref{H1} will be frequently used in the proof of Theorem \ref{TT2}, since the invariant probability of \eqref{rvr} is $\hat{\mu}_n$ but the given    functional inequalities \eqref{PC10} and \eqref{A4} associates to $\mu_n$.
}
\beg{proof}[Proof of Theorem \ref{TT2} (1)]  We consider $p\in (1,\infty)$ and $p=1$ respectively.

 (a) Let $p\in (1,\infty)$.  To apply \eqref{PC10} to $u_t$ and $f_{n,t}$, we first make them uniformly positive as required:
   $$u_{t}^{(\vv)}:=\ff{u_t+\vv}{1+\vv},\ \ \ u_{n,t}^{(\vv)}:= \ff{u_{n,t}+\vv}{1+\vv},\ \ \ f_{n,t}^{(\vv)}:= \ff{u_{n,t}+\vv}{\theta_{n,t}+\vv},\ \ \ \vv\in (0,1),\ n\in\mathbb N,\ t\ge 0.$$
   Similarly to \eqref{H2}, we find constants $1<\gg_{n,\vv}\to 1$ as $(n,\vv)  \to (\infty,0)$, such that
   \beq\label{H2'}  \phi'(f_{ n,t}^{(\vv)}\rr_V) +\psi'(f_{n,t}^{(\vv)})\le \gg_{n,\vv} \big(\phi'(\rr_{n,t} ) +\psi'(u_{n,t})\big).\end{equation}
 By \eqref{H1}, \eqref{H2'} and
  \eqref{ERG},   we obtain
 \beq\label{EE*} \beg{split}&  \partial_t {\rm Var}_{p,\hat\mu_n} \big(u_{n,t}^{(\vv)}\big)= \ff 1 {p-1} \pp_t \hat\mu_n\Big( \big(u_{n,t}^{(\vv)}\big)^p \Big)\\
 & = -  p  \hat\mu_n\Big( A_n(\rho_{n,t})\big(u_{n,t}^{(\vv)}\big)^{p-2} \big|\nabla u_{n,t}^{(\vv)} \big|^2 \Big)   \\
 &\le -  \ff {p}{\gg_{n,\vv} s_n^{p+1}}   \mu_n \Big(\big[\phi'(f_{n,t}^{(\vv)}\rr_{V})+\psi'(f_{n,t}^{(\vv)})\big] \big(f_{n,t}^{(\vv)}\big)^{p-2} \big|\nn f_{n,t}^{(\vv)}\big|^2\Big),\\
 &\qquad\qquad  \ t\ge 0,\ n\ge 1,\ \vv\in (0,1).\end{split}\end{equation}
 Combining this with \eqref{PC10} and \eqref{H1}, we derive
\beq\label{H3} \beg{split}& \partial_t {\rm Var}_{p,\hat\mu_n} \big(u_{n,t}^{(\vv)}\big) \le -  \ff {\ll}{\gg_{n,\vv} s_n^{p+1}}  \mu_n\Big(\big(f_{n,t}^{(\vv)}\big)^p  -1 \Big) \\
&\le  -  \ff {\ll}{\gg_{n,\vv}s_n^{2p+2} } \hat \mu_n\Big(\big(u_{n,t}^{(\vv)}\big)^p -1\Big) + \ff {\ll}{\gg_{n,\vv}s_n^{2p+2} } \big|s_n^p-1\big|   \\
&=  - \ff {\ll }{\gg_{n,\vv}  s_n^{2p+2}   } {\rm Var}_{p,\hat\mu_n} \big(u_{n,t}^{(\vv)}\big)  + \ff {\ll}{\gg_{n,\vv} s_n^{2p+2} } \big|s_n^p  -1\big|.\end{split}\end{equation}
 Then
\beq\label{H44}  {\rm Var}_{p,\hat\mu_n} \big(u_{n,t}^{(\vv)}\big)  \le \exp\bigg[-\ff {\ll t}{\gg_{n,\vv} s_n^{2p+2}   }    \bigg] {\rm Var}_{p,\hat\mu_n}  \big(u_{n,0}^{(\vv)}\big)  +
 \big|s_n^p- 1\big|,\ \  n\ge 1,\ t\ge 0. \end{equation}

On the other hand,    by  \eqref{VRL}, \eqref{W*},  \eqref{H1} and $s_{n_k}\to 1$ as  $k\to\infty$, for any  $f\in \B_b(\R^d)$ with $\mu(|f|^{\ff p{p-1}})\le 1,$ we have
 \beg{align*}&\Big|\mu \big( u_t^{(\vv)} f\big) \Big|^p  =\lim_{k\to\infty} \Big|\hat \mu_{n_k}  \big(u_{n_k, t}^{(\vv)} f\big) \Big|^p  \\
 &\le  \liminf_{k\to\infty}    \hat\mu_{n_k}\Big(\big(u_{n_k, t}^{(\vv)}\big)^p\Big)    =  \liminf_{k\to\infty}    {\rm Var}_{p,\hat\mu_{n_k}} \big(u_{n_k,t}^{(\vv)}\big) +1.   \end{align*}
 This  implies
$$  {\rm Var}_{p, \mu} \big(u_t^{(\vv)} \big) \le   \liminf_{k\to\infty}    {\rm Var}_{p,\hat\mu_{n_k}} \big(u_{n_k,t}^{(\vv)}\big), $$ so that
by \eqref{H44} and that  $s_{n_k},\gg_{n_k,\vv}\to 1$ as $k\to\infty$ and $\vv\to 0$, we may first letting $k\to\infty $ then $\vv\to 0$ to derive
\beq\label{X*3}\liminf_{\vv\to 0}  {\rm Var}_{p, \mu} \big(u_t^{(\vv)} \big)\le \e^{- \ll t}  \liminf_{\vv\to 0}  \liminf_{k\to\infty}   {\rm Var}_{p,\hat\mu_{n_k}}  \big(u_{n_k, 0}^{(\vv)}\big),\ \ \ \vv\in (0,1),\ t\ge 0.\end{equation}

Moreover, by  \eqref{VRL},  \eqref{intbo} and the dominated convergence theorem,
we have
$$\liminf_{\vv\to 0} \lim_{k\to\infty}   {\rm Var}_{p,\hat\mu_{n_k}} \big(u_{n, 0}^{(\vv)}\big)  =  \lim_{\vv\to 0}   {\rm Var}_{p, \mu} \big(u_0^{(\vv)}\big)
=  {\rm Var}_{p, \mu}  (u_0).$$
Combining this with \eqref{X*3}    we derive   \eqref{A2}.

(b) Let $p=1$ so that ${\rm Var}_{1,\mu}(f)= \mu(f\log  f) $ for $f\ge 0$ with $\mu(f)=1$. Then \eqref{PC10}  becomes  the nonlinear log-Sobolev inequality
\beq\label{LS}  \beg{split}  \mu_n(f\log f) \le &\ff 1\ll \mu_n\big(\big[\phi'(f\rr_{V} )+\psi'(f)\big]  f^{-1} |\nn f|^2\big),\\
&\quad  \ \  f\in C_b^1(B_n),\ \inf_{B_n} f>0,\ \mu_n( f)=1,\end{split} \end{equation}
and  \eqref{A2} reduces to the exponential convergence in entropy
\beq\label{ENT} \mu(u_t\log u_t)\le \e^{-\ll t} \mu(u_0\log u_0),\ \ \ t\ge 0.\end{equation}
   By \eqref{EN*}, \eqref{H1} and \eqref{H2'}, we obtain
   \beg{align*} &-\pp_t E_{n,\vv}(t) \ge \hat\mu_n\bigg(\Big(\phi'(\rr_{n,t} ) + \psi'(u_{n,t})\Big) \ff{|\nn u_{n,t}|^2}{u_{n,t}+\vv}\bigg)\\
   &\ge  \ff 1 {s_n\gg_{n,\vv}(\theta_{n,t}+\vv)} \mu_n\bigg(\Big(\phi'(f_{n,t}^{(\vv)} \rr_V) + \psi'(f_{n,t}^{(\vv)})\Big) \ff{|\nn f_{n,t}^{(\vv)}|^2}{f_{n,t}^{(\vv)}}\bigg).\end{align*}
   This together with \eqref{LS} implies
  \beq\label{H5} - \pp_t E_{n,\vv}(t) \ge  \ff \ll {s_n\gg_{n,\vv}(\theta_{n,t}+\vv)}  \mu_n\big(f_{n,t}^{(\vv)}\log f_{n,t}^{(\vv)}\big),\ \ \ t\ge 0.\end{equation}
    Note that
      $$r\log r - r(1+\log c)+c\ge 0,\ \ \ r\ge 0, \ c>0,$$ and for any probability space $(E,\F,\nu),$
  $$\nu(f\log f) -\nu(f)\log \nu(f)= \inf_{c>0} \nu\big( f\log f- f(1+\log c) +c\big),\ \ \ 0\le f\in L^1(\nu).$$
Combining these with \eqref{H1} and $\mu_n(f_{n,t})=1$, we obtain
  {\beg{align*} &  \mu_n\big(f_{n,t}^{(\vv)}\log f_{n,t}^{(\vv)}\big) = \inf_{c>0} \mu_n\big( f_{n,t}^{(\vv)}\log f_{n,t}^{(\vv)}- f_{n,t}^{(\vv)}(1+\log c) +c\big)\\
 &\geq \ff 1{s_n(\theta_{n,t}+\vv)} \inf_{c>0}   \hat{\mu}_n\bigg( (u_{n,t}+\vv) \log (u_{n,t}+\vv) -  \big(1+\log [ c (\theta_{n,t}+\vv)] \big) +c (\theta_{n,t}+\vv)\bigg)\\
 &=  \ff 1{s_n(\theta_{n,t}+\vv)} \Big(E_{n,\vv}(t) +\vv \hat{\mu}_n(\log (u_{n,t}+\vv)) \Big).\end{align*}}
 This together with \eqref{H5} and \eqref{H1} yields
 $$\pp_t E_{n,\vv}(t)\le - \ff \ll {s_n^2 \gg_{n,\vv}(s_n+\vv)^2}   \Big(E_{n,\vv}(t) +\vv  {\hat{\mu}_n\big(\log (u_{n,t}+\vv)\big)} \Big),\ \ \ t\ge 0,$$
 so that
$$ E_{n_k,\vv}(t)\le \exp\bigg[-\ff {\ll t} {s_{n_k}^2 \gg_{n_k,\vv}(s_{n_k}+\vv)^2} \bigg] E_{n_k,\vv}(0) + \vv  {\big|\hat{\mu}_n(\log (u_{n,t}+\vv))\big|}.$$
By \eqref{intbo}, {\eqref{LX}} and that $s_{n_k}, \gg_{n_k,\vv}\to 1$ as $(k,\vv)\to (\infty,0)$,  by first letting $\vv\to 0$ then $k\to \infty$
we derive
$$  \liminf_{k\to\infty} \hat\mu_{n_k} (u_{n_k,t}\log u_{n_k,t}) \le \e^{- \ll t }   \mu(u_0\log u_0).$$
To prove \eqref{ENT}, it remains to verify
\beq\label{LMT} \mu(u_t\log u_t)\le   \liminf_{k\to\infty} \hat\mu_{n_k} (u_{n_k,t}\log u_{n_k,t}).\end{equation}
 To this end, we use the variational formula, see e.g. \cite[Proof of Theorem 1.4.2(2)]{W13},
 $$\mu(u_t\log u_t)= \sup_{f\in \B_b(\R^d)} \Big[\mu\big(u_t  f\big)-\log \mu\big(\e^f\big)\Big].$$
Combining this with \eqref{W**} and \eqref{H1},  \eqref{VRL} and $\|u_{n_k,t}\|_\infty\le M_0$ due to {\eqref{LX}}, we obtain
\beg{align*} &\mu(u_t\log u_t)= \sup_{f\in \B_b(\R^d)} \lim_{k\to\infty}  \Big[\hat\mu_{n_k}\big( u_{n_k,t} f\big)-\log \hat\mu_{n_k}\big(\e^f\big) \Big]\\
&\le \liminf_{k\to\infty} \sup_{f\in \B_b(\R^d)}  \Big[\hat\mu_{n_k}\big(  u_{n_k,t}   f\big)-\log \hat\mu_{n_k}\big(\e^f\big) \Big]\\
&= \liminf_{k\to\infty} \hat\mu_{n_k}\big(u_{n_k,t}\log u_{n_k,t}\big).   \end{align*}
So, \eqref{LMT} holds and the proof is finished.
\end{proof}

 \beg{proof}[Proof of Theorem \ref{TT2} (2)]
  Note that $\|u_{n,t}\|_\infty\le \|u_{n,0}\|_\infty$ due to Theorem \ref{pri}. Similarly to \eqref{H3} but use \eqref{A4} in place of \eqref{PC10}, we obtain
 \beg{align*}& \partial_t \hat\mu_n\big(|u_{n,t}-1|^2 \big)\le -  \ff {2}{\aa(r)\gg_n s_n^3} \mu_n\big(|f_{n,t}-1|^2\big)  + r\|u_{n,0}\|_\infty^2  \\
 &\le - \ff {2 (1-\vv)}{\aa(r)s_n^7\gg_n }   \hat\mu_n\big(|u_{n,t}-1|^2\big) + \ff {2(1+\vv^{-1})}{\aa(r)s_n^5\gg_n}  \big|s_n- 1\big|^2
 +  r\|u_{n,0}\|_\infty^2,\ \ \ \vv\in (0,1).\end{align*}
Then
\beg{align*}&\hat\mu_n\big(|u_{n,t}-1|^2 \big)\le \exp\bigg[-\ff {2 (1-\vv)t}{\aa(r)s_n^7\gg_n }  \bigg]  \hat\mu_n\big(|u_{n,0}-1|^2 \big)\\
&\qquad  +\ff {s_n^2(1+\vv^{-1})}{1-\vv} \big|s_n- 1\big|^2+ r \|u_{n,0}\|_\infty^2,\ \ \ \vv\in (0,1),\ r>0.\end{align*}
 Combining this with  \eqref{intbo},  by using $n_k$ in place of $n$ and letting $k\to\infty$ and $\vv\to 0$, we obtain
\beq\label{H4}  \mu\big(|u_t-1|^2 \big)\le \e^{-2t/\aa(r)} \mu(|u_0-1|)^2 + r    \|u_{0}\|_\infty^2, \ \ r>0,\ t\ge 0.  \end{equation}
This implies   \eqref{A5} holds. \end{proof}

\beg{proof}[Proof of Theorem \ref{TT2} (3)]     Let   $\eqref{A4}$ hold with $\aa(r)= \dd r^{-\theta}$ for $r\in (0,1]$.   By taking $k>0$ such that
  $$ \e^{-k^{\theta}/\dd} = 2^{-(2+\theta^{-1})},$$ by $\|u_t\|_\infty\le \|u_0\|_\infty$ due to Theorem \ref{pri}, and applying   \eqref{H4}   for the time interval $[t/2,t]$
  $$r= r(t):= k t^{-\ff 1\theta},$$    we obtain
\beg{align*}  \mu (|u_t-1|^2) &\le  \e^{-t r(t)^\theta/\dd} \mu (|u_{t/2}-1|^2)   + r(t)   \|u_{t/2}\|_\infty^2\\
 &\le \mu (|u_{t/2}-1|^2)2^{-(2+\theta^{-1})}  + k t^{-\ff 1\theta}  \|u_{0}\|_\infty^2,\ \ \ t\ge 0. \end{align*}
  Applying this inequality for $t/2$ in place of $t$ and repeating,     we conclude that for any $n\in \mathbb N$,
 \beg{align*}  \mu (|u_t-1|^2) &\le  \mu (|u_{t2^{-n}}-1|^2) 2^{-n(2+\theta^{-1})} + k   \|u_{0}\|_\infty^2 \sum_{i=0}^{n-1} (2^{-i}t)^{-\ff 1\theta}  2^{-i(2+\theta^{-1})}.
  \end{align*}
Since  $\|u_{t2^{-n}} \|_\infty\le \|u_0\|_\infty<\infty$, by  letting $n\to\infty$  we derive   \eqref{A6}. \end{proof}

 \section{Application to  the non-degenerate case }

 In this part, we consider the following non-degenerate case.

\beg{enumerate}  \item[$(A_1)$] {\bf (A)} and {\bf (B)} hold, and there exists a constant $c_0>0$ such that
$$\psi'(s)+\phi'(t)\ge c_0,\ \ \  s,t\ge 0.$$\end{enumerate}
Typical examples include
$$\psi(s)= a_1 s+ a_2 s^p,\  \ \ \phi(s)= b_1 s+ b_2 s^q$$ for constants $a_1,a_2, b_1, b_2\ge 0$ and $p,q>1$ with $c_0:= a_1+b_1>0.$

Under $(A_1)$,   we have
$$c_0  |\nn f|^2 \le  \mu    \big[\phi'(f\rr_{V})+\psi'(f)\big]   \big|\nn  f \big|^2,\ \ 0\le f\in C_b^1(B_n),$$
so that the following is a direct consequence of Theorem \ref{TT2} where the functional inequalities reduce to corresponding ones in the linear setting.

 \beg{cor}\label{C2.1}  Assume   $(A_1)$. Then $\eqref{E1}$ has  a distributional solution with initial value $\rr_0$ such that the following assertions hold.
 \beg{enumerate} \item[$(1)$] Let $p\in [1,\infty)$. If there exists a constant $\ll >0$ such that for large $n\ge 1$
 \beq\label{EI} {\rm Var}_{p,\mu_n}    (f) \le \ff p \ll \mu_n\big(f^{p-2}   |\nn f|^2\big),\ \ \ f\in C_b^1(B_n),\ \inf_{B_n} f> 0,\  {\mu_n(f)=1},\end{equation}
 then
\beq\label{EX13}  {\rm Var}_{p,\mu} \big( u_t \big)\le \e^{-\ll c_0 t }\,  {\rm Var}_{p,\mu} \big(u_0\big),\ \ \ t\ge 0.\end{equation}
\item[$(2)$] If there exists a decreasing $\aa: (0,\infty)\to (0,\infty)$  such that the following weak Poincar\'e  inequality hods for large $n\ge 1$:
\beq\label{AB4'}\beg{split}  & \mu_n  ( |f-1|^2)\le  \aa(r) \mu_n  \big(  |\nn  f  |^2\big)+ r  \|f\|_\infty^2,\\
&\ \ \qquad  \ 0\le f\in C_b^1(B_n),\  {\mu_n(f)=1}, \ r>0,\end{split}  \end{equation}
then
$$    \mu(|u_t-1|^2)  \le  \inf_{r>0} \Big\{ {\e^{-2c_0t/\aa(r)}} + r  \Big\}  \|u_0-1\|_\infty^2,\ \ \ t> 0. $$
\item[$(3)$] If $\eqref{AB4'}$  holds  for large $n\ge 1$ and $\aa(r)= \dd (1+r^{-\theta})$   for  some constants $\dd,\theta>0$, then there exists a constant $k>0$ such that
$$   \mu(|u_t-1|^2) \le k  t^{-\ff 1\theta} \|u_0-1\|_\infty^2,\ \ \ t>0.$$

\end{enumerate}
\end{cor}

 Below we   illustrate Corollary \ref{C2.1} using explicit conditions  on $V$.

 \beg{cor} \label{C2.2}   Assume   $(A_1)$ and let $\kk\in (0,\infty)$.  Then $\eqref{E1}$ has  a distributional solution with initial value $\rr_0$ such that the following assertions hold.
  \beg{enumerate}
 \item[$(1)$]  If $\Hess_V\ge \kk$ for some constant $\kk>0$, then
\beq\label{EX}  \mu  (u_t\log u_t)\le \e^{- 2c_0\kk  t}  \mu (u_0\log u_0),\ \ t\ge 0,\end{equation}
  \beq\label{EX2}  \mu(|u_t-1|^2)\le \e^{- 2c_0 \kk t}  \mu(|u_0-1|^2),\ \ \ t\ge 0.\end{equation}
  \item[$(2)$]   More generally, if $V= W_1+W_2+W_3$ with $\|W_1\|_\infty+\|\nn W_2\|_\infty<\infty$ and $\Hess_{W_3}\ge  \kk>0$  holds outside a compact set, then   \eqref{EX} and \eqref{EX2} hold for some constant $\kk'>0$ in place of $\kk.$
  \end{enumerate} \end{cor}
   \beg{proof}
(1) By Bakry-Emery's criterion \cite{BE} which also holds on compact domain, $\Hess_V\ge \kk$ implies the  log-Sobolev inequality
 {\beq\label{LS'} \mu_n(f^2\log f^2)\le \ff{2}{\kk} \mu_n(|\nn f|^2),\ \ \ f\in C_b^1(\R^d),\ \mu_n(f^2)=1,\end{equation}}
which is equivalent to \eqref{EI} for $p=1$ and $\ll=2\kk$.
Indeed,  since  the Neumann semigroup generated by $L_V:=\DD-\nn V\cdot\nabla $ on $B_n$ is ergodic with invariant probability measure $\mu_n$,  \eqref{LS'} follows from the following semigroup log-Sobolev inequality in  \cite[Theorem 3.3.2(10)]{Wbook} by letting $t\to\infty$. So, \eqref{EX} follows from   {Corollary \ref{C2.1}(1) for $p=1$}.

According to \cite{DS},   see also \cite[Proposition 5.1.3]{BGL},
 \eqref{LS'}   implies the Poincar\'e inequality
 { \beq\label{PC1} {\rm Var}_{\mu_n}(f)\le \ff 1 \kk \mu_n(|\nn f|^2),\ \ f\in C_b^1(\R^d),\end{equation}}
  which coincides with \eqref{EI} for $p=2$ and $\ll= 2\kk$,     so that    Corollary \ref{C2.1}(1) implies \eqref{EX2}  {  for $p=2$}.

 (2) According to \cite{A, DS2}, the log-Sobolev inequality is stable under perturbations of $V$ which are bounded or Lipschitz continuous,
 we may simply  assume $W_1=W_2=0$ so that   $\Hess_V\ge \kk>0$ holds outside a compact set.
  In this case,   by Theorem \cite[Theorem 1.1]{W09},    the log-Sobolev inequality
 \eqref{LS'} holds for any $n\ge 1$ and some constant $\kk'>0$ in place of $\kk$, which implies,  as explained above,    the estimates   \eqref{EX} and \eqref{EX2}  for $\kk'$ in place of $\kk$.
\end{proof}

Next,  we consider $V$ such that    the log-Sobolev inequality may be not available  but  the Poincar\'e inequality holds.

 \beg{cor} \label{C2.3}  Assume   $(A_1)$. Then $\eqref{E1}$ has  a distributional solution with initial value $\rr_0$ such that the following assertions hold.
   \beg{enumerate}
  \item[$(1)$] Let $V(x)=W(x)+  V_0(x)$ for some   bounded $W $ and $V_0\in C^1(\R^d)$.    If  {\beq\label{LY}  \liminf_{|x|\to\infty} \ff 1 {|x|}\<\nn V_0(x),x\>  >0,\end{equation}}
 then for some   constant $\ll >0$,
   $$ \mu(|u_t-1|^2)\le \e^{- \ll   t}  \mu (|u_0-1|^2),\ \ \ t\ge 0.$$
  \item[$(2)$] Let $V(x)=W(x)+  \ll |x|^q$ for some constants $\ll>0, q\in [1,2)$ and $W\in C_b(\R^d)$. Then there exists a constant $c>0$ such that
  $${\rm Var}_{p,\mu}  (u_t)\le \exp\Big[- c (p-1)^{\ff{2-q}q}  t\Big] {\rm Var}_{p, \mu}  (u_0),\ \ t\ge 0,\ p\in (1,2].$$
  \end{enumerate}
   \end{cor}

  \beg{proof}   (1)   By \cite[Corollary 1.4]{W99}, if \eqref{LY} holds,   then there exists a constant $\ll >0$ such that for any $n\ge 1$ the Poincar\'e inequality
\eqref{EI} with $p=2$ holds for $\bar{\mu}(\d x):= Z(V_0)^{-1} \e^{-V_0(x)}\d x$ in place of $\mu$. Since $W$ is bounded, by the standard perturbation result of Poincar\'e inequality,
\eqref{EI} holds for $\mu$ and  a different constant $\ll>0$. Then the desired estimate follows from Corollary \ref{C2.1} $(1)$ for  {$p=2$}.

(2) Let $p\in (1,2]$. Using  {$\ff{|f|^{\ff 2p}}{\mu(|f|^{\ff 2p} )}$} in place of $f$, we see that \eqref{EI}  is equivalent to  the $p$-variance inequality
  {  \beq\label{PV}   \mu_n(f^2)-\mu_n(|f|^{\ff 2 p})^{p} \le \ff{4 (p-1)}{\lambda p}\mu_n(|\nn f|^2),\ \   f\in C_b^1(\R^d).\end{equation}}
By \cite[Corollary 6.2.3]{Wbook}, for the given $V$,  {\eqref{PV} holds for any $p\in (1,2]$ and $\lambda=k_0(p-1)^{\frac{2-q}{q}}$ with some constant $k_0>0$,}
  so that    the desired estimate follows from Corollary \ref{C2.1} $(1)$. \end{proof}

 Moreover,  we consider examples  where the Poincar\'e inequality fails, and    $ \mu(|u_t-1|^2)$ decays in there typical rates: sub-exponential, algebraic and logarithmic.

    \beg{cor} \label{C2.4}  Assume  $(A_1)$. Then $\eqref{E1}$ has  a distributional solution with initial value $\rr_0$ such that  the following assertions hold.
 \beg{enumerate}
      \item[$(1)$]  Let $V(x)=W(x)+  \lambda |x|^q$ for some constants $\lambda>0, q\in (0,1)$ and bounded $W$. Then there exists a constant $k_1,k_2>0$ such that
 \beq\label{WE1}  \mu(|u_t-1|^2)\le      k_1 \big\|u_0\big\|_\infty^2 \exp\Big[- k_2 t^{\ff q {4-3q}}\Big],\ \ \ t\ge 0.\end{equation}
  \item[$(2)$]    Let $V(x)=W(x)+ (d+q) \log (1+|x|)$ for some constant $q>0$ and  bounded $W$. Then there exists a constant $k >0$ such that
 \beq\label{WE2} \mu(|u_t-1|^2) \le      k  \big\|u_0\big\|_\infty^2 t^{- \tau},\ \ \ t> 0, \end{equation}
 where
 $$\tau:=\max\Big\{\ff q{d+2+q},\ \ff{q^2-4-2d-2q}{4q+4+2d}\Big\}.$$
    \item[$(3)$]        Let $V(x)=W(x)+  d  \log (1+|x|) +q\log\log (\e+ |x|)$ for some constant $q>1$ and  bounded $W$.  Then there exists a constant $k >0$ such that
 \beq\label{WE3}\mu(|u_t-1|^2) \le       k \big\|u_0\big\|_\infty^2  [\log(1+ t)]^{1-q},\ \ \ t> 0. \end{equation}
       \end{enumerate} \end{cor}
\beg{proof} By Corollary \ref{C2.1} $(2)$-$(3)$, it suffices to verify \eqref{AB4'} for suitable $\aa(r)$.
 Since this inequality   trivially holds for $\aa(r)=0$ and $r\ge 1$,
we only need to consider  $r\in (0,1].$

 (1)  By the argument in the proof of \cite[Example 1.4(c)]{RW01}, there exists a constant $c>0$ such that  for any $n\ge 1$, the inequality \eqref{AB4'}
 holds for
 $$\aa(r):= c\big[\log (1+r^{-1})\big]^{\ff{4(1-q)}q},\ \ \ r\in (0,1].$$
 So,  by Corollary \ref{C2.1}(2), we derive \eqref{WE1} for some constants $k_1,k_2>0.$

 (2)    For $V=W+(d+q)\log (1+|x|)$ with bounded $W$, by the proof of \cite[Example 1.4(a)]{RW01},   we find a constant $c>0$ such that \eqref{AB4'} holds for
  any $n\ge 1$ and $\aa(r)= c r^{-\ff 1\tau}$.
  Therefore,   by Corollary \ref{C2.1}(3), we derive \eqref{WE2} for some constant  $k >0.$

 (3) By  the proof of \cite[Example 1.4(b)]{RW01},   there exists a constant $c>0$ such that \eqref{AB4'}
 holds for and $n\ge 1$ and
 $$\aa(r):= \exp\big[c r^{-\ff 1 {q-1}}\big],  \ \ \ r\in (0,1].$$
 So,  by Corollary \ref{C2.1}(2), we derive \eqref{WE3} for some constant  $k >0.$
  \end{proof}

  \section{Application to  the  degenerate case }

In this part, we consider the degenerate case under the following assumption.
\beg{enumerate}  \item[$(A_2)$]  {\bf (A)} and {\bf (B)} hold,     and $K_\psi(r)+K_\phi(r)>0$ for $r>0$, where
$$K_\psi(r):=\inf_{s\ge r}\psi'(s),\ \ \ K_\phi(r):=\inf_{s\ge r}\phi'(s).$$
\end{enumerate}

 \subsection{Estimate on the convergence rate using  $K_\psi$.}

\beg{cor}\label{C4.1} Assume   $(A_2)$ with
   $K_\psi(r) >0$ for $r>0$. Then $\eqref{E1}$ has  a distributional solution with initial value $\rr_0$ such that the following assertions hold.
   \begin{enumerate}
   \item[$(1)$]Let  $\eqref{EI}$  {with $p=2$} hold  for some constant $\ll>0$ and  large $n\ge 1$. Then
   \beq\label{B1} \mu (|u_t-1|^2)\le \inf_{r>0}\Big\{\e^{- \ll K_\psi(r/3) t } + r\Big\} \|u_0\|_\infty^2,\ \ t>0.\end{equation}
  If $K_\psi(r)\ge \dd r^{\theta}$ for $r\in (0,1]$ and some constants $\dd,\theta >0$, then there exists a constant $k>0$ such that
\beq\label{B1'}  \mu (|u_t-1|^2)\le   k t^{-\ff 1 \theta} \|u_0\|_\infty^2,\ \ t>0.\end{equation}
\item[$(2)$] { If the weak Poincar\'e inequality \eqref{AB4'} holds for large $n\ge 1$, then there exists a constant $k>0$ such that
 \beq\label{B3}   \mu(|u_t-1|^2)  \le k \inf_{r>0} \Big\{\e^{-\ff{K_\psi(r/4)}{ \aa(r/4)}t } + r\Big\} \|u_0\|_\infty^2,\ \ \ t> 0.
 \end{equation}
If $\ff{K_\psi(r)}{\aa(r)}\ge \dd r^{\theta}$ for $r\in (0,1]$ and some constants $\dd,\theta >0$, then there exists a constant $k>0$ such that
 \beq\label{B4} \mu(|u_t-1|^2) \le k  t^{-\ff 1\theta} \|u_0\|_\infty^2,\ \ \ t>0.
 \end{equation}}
\end{enumerate}
 \end{cor}

\beg{proof}  Let ${\rm Var}_{\mu_n} (g):= \mu_n(g^2)- \mu_n(g)^2, \  g\in L^2(\mu_n).$
For any $f\in C_b^1(\R^d)$ with $f\ge 0,  {\mu_n(f)=1},$ we have
$$ \mu_n(f\lor r)^2 -1\le (1+r)^2-1\le 2r +r^2\le 3 r,\ \ r>0,$$ so that
\beq\label{HH} \beg{split}  & \mu_n (|f-1|^2) = \mu_n(f^2)-1\le  \mu_n ((f\lor r)^2)-1\\
&\le {\rm Var}_{\mu_n} (f\lor r )  + 3r,\ \ r>0.\end{split}\end{equation}
Noting that    \eqref{EI}  {with $p=2$} is equivalent to
$$  \mu_n(g^2)-\mu_n(g)^2 \le \ff 2 \ll  \mu_n\big( |\nn g|^2\big),\ \  \  g\in C_b^1(B_n),$$
by     \eqref{HH} and this inequality for $g= (f\lor r)|_{B_n}$,  we obtain
\beg{align*}& \mu_n (|f-1|^2)  \le \ff 2 \ll \mu_n\big( |\nn f|^21_{\{f\ge r\}}\big) + {3r} \\
 &\le \ff{2}{\ll K_\psi(r)} \mu_n\big( (\phi'(f\rr_V)+\psi'(f))|\nn f|^2\big) + 3r,\ \ r\in (0,1].\end{align*} Since  {$\|u_0\|_\infty \ge 1$,}
this implies  \eqref{A4}  for $\aa(r):= \ff{2}{\ll K_\psi(r/3)}$.
So,       \eqref{B1} follows from \eqref{A5}.

 If $K_\psi(r)\ge \dd r^\theta$ for $r\in (0,1]$, then  \eqref{A4}  holds for $\aa(r):=  \dd(1+ r^{-\theta})$ for some   constant $\dd>0$, so that
 {\eqref{B1'} follows from   \eqref{A6}.}

  {(2) If \eqref{AB4'} holds, then   \eqref{HH} implies
\beg{align*} & \mu_n (|f-1|^2)  \le\alpha (r)\mu_n\big( |\nn f|^21_{\{f\ge r\}}\big) +3 r
+r\|f\|_\infty^2\\
 &\le \ff{\aa(r)}{K_\psi(r)} \mu_n\big({(\phi'(f\rr_V)+\psi'(f))}|\nn f|^2\big) +  4 r \|f\|_\infty^2,\ \ r\in (0,1].
 \end{align*}
 Then the proof is finished as explained in step (1).}
 \end{proof}

\beg{rem} When $\phi=0$ and $\psi(s)=s^m$ for some $m>1$,
the algebraic    convergence rate       has been derived in \cite[Theorem 4.5]{DGGW}  for the classical solution
  by using   the $L^{2/(m+1)}$-Poincar\'e and $L^{1/m}$-log-Sobolev    inequalities.
The above Corollary \ref{C4.1} describes a more general convergence rate for distributional solutions of  more general models.
 \end{rem}

Below we consider a special case  $\psi(s)=\kk s^m$ for some constants $\kk>0$ and $m>1$.

\beg{cor}\label{C4.2} Assume  $(A_2)$ where
  $\psi(s)=\kk s^m$ for some constants $\kk>0$ and $m>1$.  Then $\eqref{E1}$ has  a distributional solution with initial value $\rr_0$ such that  the following assertions hold.
\beg{enumerate} \item[$(1)$]
If the Poincar\'e inequality \eqref{EI} holds for large $n\ge 1$, then there exists a constant $k>0$ such that
$$  \mu (|u_t-1|^2)\le k t^{-\ff 1 {m-1}} \|u_0\|_\infty^2,\ \ \ t>0.$$
It is the case for $V$ in Corollary \ref{C2.3}.
\item[$(2)$]  For $V$ in Corollary $\ref{C2.4}(1)$, there exists a constant $k>0$ such that
\beq\label{V1} \mu(|u_t-1|^2)\le  k \big\|u_0 \big\|_\infty^2  t^{-\ff 1 {m-1}}\big[\log(1+t)\big]^{\ff{5q-4}{q(m-1)}},\ \ t>0.\end{equation}
For $V$ in Corollary $\ref{C2.4}(2)$, there exists a constant $k>0$ such that
\beq\label{V2} \mu(|u_t-1|^2)\le  k \big\|u_0 \big\|_\infty^2   {t^{-\ff 1 {m-1+\tau^{-1}}}},\ \ t>0.\end{equation}
For $V$ in Corollary $\ref{C2.4}(3)$, there exists a constant $k>0$ such that
\beq\label{V3} \mu(|u_t-1|^2)\le k \big\|u_0 \big\|_\infty^2   {[\log(1+ t)]^{1-q}},\ \ t>0.\end{equation}
 \end{enumerate}
 \end{cor}

 \beg{proof} It is easy to see that $K_\psi(r)=\kk mr^{m-1}$, and  the Poincar\'e inequality \eqref{EI} holds for $V$ in Corollary \ref{C2.3}.
 So, the first two assertion  follows from Corollary \ref{C4.1}. It remains to prove the last assertion.

 For $V$ in Corollary \ref{C2.4}(1), \eqref{AB4'} holds for any $n\ge 1$ and $\aa(r)= c \big[\log(1+r^{-1})\big]^{\ff{4(1-q)}q}$ for some constant $c>0$. So, taking
 $$r=\ell t^{-\ff 1 {m-1}} \big[\log (1+t)\big]^{\ff 1 {m-1}(1+\ff{4(q-1)}q)}$$
 for a large enough constant $\ell >0$, we find a constant $k(\ell )>0$ such that
 $$\inf_{r>0}\Big\{\e^{-\ff{K_\psi(r)}{\aa(r)}t } + r\Big\}= \inf_{r>0}\Big\{\e^{-\ff{\kk mr^{m-1}}{\aa(r)}t } + r\Big\}\le k(\ell) t^{-\ff 1 {m-1}}\big[\log(1+t)\big]^{\ff{5q-4}{q(m-1)}}.$$
 So, \eqref{V1} follows form \eqref{B3}  for some different constant $k>0$.

  For $V$ in Corollary \ref{C2.4}(2), \eqref{AB4'} holds for any $n\ge 1$ and $\aa(r)=  {c r^{-\ff 1 \tau}}$ for some constant $c>0$ as shown in the proof of Corollary \ref{C2.4}(2), so that $\ff{K_\psi(r)}{\aa(r)}=\ff{\kk m }{c} r^{m-1+\tau^{-1}}.$ Then
  \eqref{V2} follows from \eqref{B4} for  {$\theta= m-1+\tau^{-1}.$}

   For $V$ in Corollary \ref{C2.4}(3), \eqref{AB4'} holds for any $n\ge 1$ and $\aa(r)= \exp[cr^{-\ff 1 {q-1}}]$ for some constant $c>0$, so that $\ff{K_\psi(r)}{\aa(r)}\ge \exp[-c'r^{-\ff 1 {q-1}}]$  for some constant $c'>0$. Then
  \eqref{V3} follows from \eqref{B3}.
  \end{proof}

 \subsection{Estimate on the  convergence rate using  $K_\phi(s)+K_\psi(r)$.}

 Since $\DD-\nn V$ has Neumann spectral gap on bounded balls, we may  choose a decreasing function $\ll: [1,\infty)\to (0,\infty)$   such that
 \beq\label{ABS} \mu(1_{B_s} f^2)\le \ff 1 {\ll_s} \mu(1_{B_s}|\nn f|^2),\ \ \ f\in C_b^1(B_s),\ \mu(1_{B_s} f)=0,\ s\ge 1.\end{equation}
Moreover,     let
\beg{align*}
&g_s:= \inf\big\{\rr_V(x): |x|\le s\big\},\ \ \
  h_s:= \mu\big(|\cdot|>s \big),\ \ s>0,\\
  &  s(r):=\inf\big\{s\ge 1:\ h_s\le r\big\}, \ \ \ r>0.\end{align*}

\beg{cor}\label{C5.1} Assume   $(A_2)$ and let $\eqref{ABS}$ hold for some decreasing $\ll_\cdot$.
Then $\eqref{E1}$ has  a distributional solution with initial value $\rr_0$ such that
$$ \mu(|u_t-1|^2)\le \big\|u_0\big\|_\infty^2 \inf_{r>0} \Big\{\e^{- t\ll_{s(r)} [K_\phi(rg_{s(r)})+K_\psi(r)]} +5  r\Big\},\ \ \ t>0.$$
If $\ll_{s(r)} [K_\phi(rg_{s(r)})+K_\psi(r)]\ge \dd r^\theta$ for $r\in (0,1]$ and some constants $\dd,\theta>0$, then there exists a constant $k>0$ such that
$$\mu(|u_t-1|^2)\le k \big\|u_0\big\|_\infty^2  t^{-\ff 1 \theta},\ \ t>0.$$
\end{cor}

\beg{proof} We use the argument in the proof of Corollary  \ref{C4.1}.
 {For $f\in C_b^1(\R^d)$ with $f\ge 0$ and  {$\mu_n(f)=1$},   we have
 {$$f^2\le (f\lor r)^2 1_{B_s} +\|f\|_\infty^2 1_{B_s^c},\ \ \ s\ge 1,\ r\in (0,1).$$}
So,
\beg{align*}&  \mu_n ( |f -1|^2)=\frac{\mu(f^21_{B_n})}{\mu(B_n)}-1  \le \frac{\mu\big((f\lor r)^2 1_{B_{s\wedge n}}\big)+\|f\|_\infty^2 h_s}{\mu(B_n)} -1\\
&\le \frac{{\rm Var}_{\mu}\big((f\lor r)1_{B_{s\wedge n}} \big) +\|f\|_\infty^2 h_s}{\mu(B_n)}+\mu_n (f\lor r)^2 \mu(B_n)-1\\
&\le  \frac{{\rm Var}_{\mu}\big((f\lor r)1_{B_{s\wedge n}} \big) +\|f\|_\infty^2 h_s}{\mu(B_n)} + 3r,\ \ r>0,\ s>0, \end{align*}
where the last step is due to $\mu_n(f\lor r)^2\le (1+r)^2\le 3r+1$ for $r\in (0,1]$.
Combining this with  $\|f\|_\infty\ge 1$,    and the definitions of $K_\psi, K_\phi$ and $\ll_s$ and the fact that $\lambda_s$ is decreasing in $s$, we derive for large enough $n\geq 1$ satisfying $\mu(B_n)\geq \frac{1}{2}$,
\beg{align*}&{\rm Var}_{\mu_n}(f) \le \ff 1 {\ll_{s\wedge n}} \mu_n\big( |\nn f|^2 1_{\{f\ge r\}\cap B_s  } \big) + (4r +2h_s) \|f\|_\infty^2\\
&\le \ff{ 1}{\ll_s [K_\phi(rg_s) +K_\psi(r)]}\mu_n\big( (\phi'(f\rr_V)+\psi'(f))|\nn f|^2\big)+ (4r +2h_s) \|f\|_\infty^2\end{align*} for any $r\in (0,1),\ s>0.$
Taking $s=s(r)$ such that $h_{s(r)}\le \frac{r}{2}$, }we arrive at
$${\rm Var}_{\mu_n}(f) \le \ff { 1} {\ll_{s(r)}[K_\phi(rg_{s(r)})+K_\psi(r)]} \mu_n \big(  (\phi'(f\rr_V)+\psi'(f))|\nn f|^2\big)+ 5r  \|f\|_\infty^2$$ for $ r\in (0,1]. $ This implies  \eqref{A4} for
 $$\aa(r):= \ff { 1} {\ll_{s(r/5)}[K_\phi(rg_{s(r/5)})+K_\psi(r/5)]}.$$   So,
   the desired assertion follows from Theorem \ref{TT2}(2).
\end{proof}

 To apply Corollary \ref{C5.1}, we present an estimate on $\ll_s$, which follows form the two displays after  (3.10) in the proof of  \cite[Theorem 3.4]{RW01}.

 \beg{lem}\label{L1} Let $\gg\in C([0,\infty))$ and $r_0>0$ such that
 $$L_V|\cdot|(x) \le \gg(|x|),\ \ \ |x|\ge r_0$$
  {for $L_V=\Delta-\nabla V\cdot\nabla$.}
 Define
 \beg{align*}& \tt\psi(s):=\inf_{r\in [r_0,s]} \big[-\gg(r)],\ \ s>r_0,\\
 &\tt\phi(s):= \inf\big\{r\ge r_0:\ \tt\psi(s)^2s^2 \ge 9\mu(B_r^c)\big\},\ \ s>r_0,\\
 &\dd_s(V):= \sup\big\{V(x)-V(y):\ x,y\in B_s\big\},\ \ s>0,\end{align*}
 where $\inf\emptyset:=\infty$.
 Then there exists a constant $k>0$ such that
 \beq\label{NM} \ll_s\ge k\tt\psi(s)^2 \e^{-\dd_{\tt\phi(s)}(V)},\ \ \ s>r_0.\end{equation}
 \end{lem}

 Since  examples for $K_\psi(r)>0$ have been addressed in the last subsection, below we only illustrate Corollary \ref{C5.1} by an example with $\psi=0$.

\beg{cor}   Assume   $(A_2)$ with   $\psi=0$ and $\phi(s)= \kk s^m$ for some constants $\kk>0, m>1$. Then $\eqref{E1}$ has  a distributional solution with initial value $\rr_0$ such that  the following assertions hold.
 \beg{enumerate} \item[$(1)$] Let
 $$V(x)= W(x) + \bb |x|^q$$
 for some $W\in C_b(\R^d)$ and constants $\bb, q>0.$
   If $q\ge 1$, then  there exists a constant $k >0$ such that
 \beq\label{D1} \mu(|u_t-1|^2)\le k \big\|u_0\big\|_\infty^2 t^{-\ff 1 {2(m-1)}},\ \ t>0.\end{equation}
 If $q\in (0,1)$, then for any $\theta>2(m-1)$, there exists a constant $k >0$ such that
 \beq\label{D2} \mu(|u_t-1|^2)\le      k  \big\|u_0\big\|_\infty^2 t^{- \ff 1 {\theta}},\ \ \ t> 0. \end{equation}
 \item[$(2)$]    Let $V(x)=W(x)+ (d+q) \log (1+|x|)$ for some constant $q>0$ and  $W\in C_b(\R^d)$. Then there exists a constant $k >0$ such that
 \beq\label{D3} \mu(|u_t-1|^2)\le      k  \big\|u_0\big\|_\infty^2 t^{- \ff q{2+ (m-1)(2q+d)}},\ \ \ t> 0. \end{equation}
     \item[$(3)$]        Let $V(x)=W(x)+  d  \log (1+|x|) +q\log\log (\e+ |x|)$ for some constant $q>1$ and  $W\in C_b(\R^d)$. Then there exists a constant $k >0$ such that
 \beq\label{D4} \mu(|u_t-1|^2)\le      k  {\|\rr_0/\rr_V\big\|_\infty^2}[\log(1+ t)]^{1-q},\ \ \ t> 0. \end{equation}
       \end{enumerate}\end{cor}
 \beg{proof}   Since the Poincar\'e inequality on $B_s$ is stable under the bounded perturbation $W$, we may and do assume $W=0.$
Let $L_V:=\DD-\nn V\cdot\nn$.

 (1) When $q\ge 1$, we have
 $$\lim_{|x|\to\infty} L_V |x|= \lim_{|x|\to\infty}\Big(\ff{d-1}{|x|} -  \bb q |x|^{q-1} \Big)<0.$$
Then there exists $r_0,\kk_0>0$ such that
$$L_V |x|\le -\kk_0,\ \ \ |x|\ge r_0.$$
Choose $\gg\equiv -\kk_0$, we find constants $\kk_1,\kk_2>0$ such that $\tt\psi(s)\ge \kk_1, \tt\phi(s)\le \kk_2$ for $s\ge r_0.$
So, by \eqref{NM} and  {$\inf_{s\in (0,r_0]}\ll_s>0$}, we find a constant $\ll>0$ such that
$$\ll_s\ge   \ll,\ \ s>0.$$
 Since $\mu(\d x)= c\e^{-\bb |x|^q}\d x$ for some constants $c>0$ and $q\ge 1$, there exists a constant $k_1 >0$ such that
 $$h_s\le \e^{k_1 -  \bb s^q},\ \ \ s>0,$$ so that
\beq\label{K2} s(r):=\inf\{s>0: h_s\le r\}\le \Big[\ff{1}\bb(k_1 +\log r^{-1})\Big]^{\ff 1 q}.\end{equation}
 Then there exist  constants $k_2>0, k_3 >0$ such that
\beq\label{K3} u_{s(r)}\ge \e^{-k_2-\bb s(r)^q} \ge k_3  r.\end{equation}  Thus, there exists a constant $k_4 >0$ such that
 $$\ll_{s(r)} K_\phi(ru_{s(r)}) \ge  \ll  m\kk\big(ru_{s(r)}\big)^{m-1}\ge k_4  r^{2(m-1)}.$$
Therefore,   \eqref{D1}  follows from Corollary \ref{C5.1}.

 Now, let $q\in (0,1)$. We find  constants $r_0,\kk_1>0$ such that
 $$(L_V|\cdot|)(x)\le -\kk_1  |x|^{q-1},\ \ \ |x|\ge r_0.$$
 So,   $\tt\psi(s)\ge \kk_1 s^{q-1}$ and by $\mu(B_r^c)\le \ff c {(1+r)^q}$ for some constant $c>0$, we find a constant $\kk_2>0$ such that
 $$ \tt\phi(s) \le \kk_2 s^{-2},\ \ \ s>r_0.$$
 So, by \eqref{NM} and noting that $\dd_{\kk_2 s^{-2}}(V) $ is bounded for $s>r_0$, we find a constant $\kk_3>0$ such that
 $$\ll_s\ge \kk_3 s^{2(q-1)},\ \ \ s>r_0.$$
 Since $\inf_{s\in (0,r_0]}\ll_s>0$  {and $q\in(0,1)$}, we find a constant $\ll>0$ such that
   $$ \ll_s\ge       \ll (1+s)^{2(q-1)},\ \  {s>0}.$$
On the other hand, for any $\vv>0$ we find a constant $k_1>0$ such that
$$h_s\le \e^{k_1 -  \ff{\bb}{1+\vv} s^q},\ \ \ s>0,$$
 so that
 $$1+s(r):=1+\inf\{s>0: h_s\le r\}\le  {1+\Big[\ff{1+\vv}\bb(k_1 +\log r^{-1})\Big]^{\ff 1 q}}.$$
Then as  explained above, we find   constants $k_2,k_3>0$ such that
\beg{align*}&\ll_{s(r)} K_\phi(ru_{s(r)}) \ge  \ll (1+s(r))^{2(q-1)} m\kk\big(ru_{s(r)}\big)^{m-1}\\
&\ge k_2  [\log(1+r^{-1})]^{\ff{2(q-1)}q} r^{2(\vv+1)(m-1)}\ge k_3 r^{2(2\vv+1)(m-1)} ,\ \ r\in (0,1].\end{align*}
Therefore,   \eqref{D2}  follows from Corollary \ref{C5.1} by taking $\vv>0$ such that $2(2\vv+1)(m-1)=\theta.$

(2)  For $V(x)= (d+q)\log (1+|x|)$,   we find a constant $r_0>0$   such that
$$(L_V|\cdot|)(x)=\ff{d-1}{|x|}-\ff{d+q}{1+|x|}\le -\ff 1{1+|x|},\ \ \ |x|\ge r_0.$$
Then $\tt\psi(s)\ge  (1+s)^{-1}$ and $\tt\phi(s)$ is bounded for $s\geq r_0$, hence \eqref{NM} implies
\beq\label{90} \ll_s\ge \ll(1+s)^{-2},\ \ \ s>0\end{equation}
for some constant $\ll>0$.

On the other hand,
there exists   constants $\kk_1,\kk_2>0$ such that
$$s(r)\le \kk_1 r^{-\ff 1 q},\ \ \ u_{s(r)}\ge \kk_2 r^{\ff{d+q}q},\ \ r\in (0,1].$$
Combining this with \eqref{NM},  we find  a constant $\kk_3>0$ such that
$$\ll_{s(r)} K_\phi(ru_{s(r)}) = \ll_{s(r)} m\kk  (ru_{s(r)})^{m-1} \ge \kk_3 r^{\ff{2+(m-1)(2q+d)}q},\ \ r\in (0,1].$$
Therefore,   \eqref{D3}  follows from Corollary \ref{C5.1}.

(3) In this case, we find constants $r_0>0$   such that
$$(L_V|\cdot|)(x) \le  {-\ff 1 {2(1+|x|)}},\ \ \ |x|\ge r_0,$$
so that as in (2) we find a constant $\ll>0$ such that  \eqref{90} holds.

On the other hand, there exist constants $\kk_1,\kk_2,\kk_3>0$ such that
$$h_s\le \kk_1 [\log(\e+s)]^{1-q},\ \ \ s(r)\ge \e^{\kk_2 r^{-\ff 1 {q-1}}},\ \ \ r\in (0,1],$$
so that for some constants $\kk_4 >0$,
$$\ll_{s(r)}K_\phi(ru_{s(r)})\ge   \e^{-\kk_4 r^{-\ff 1{q-1}}},\ \ \ r\in (0,1].$$
Then \eqref{D4} follows from Corollary \ref{C5.1}.
 \end{proof}


\begin{thebibliography}{10}
\bibitem{A} S. Aida, \emph{Uniformly positivity improving property, Sobolev inequalities and spectral gap,} J. Funct. Anal. 158(1998), 152-185.
\bibitem{BE} D. Bakry, M. Emery, \emph{ Hypercontractivit\'e de semi-groupes de diffusion,} C. R.
Acad. Sci. Paris S\'er. I Math. 299(1984), 775-778.
\bibitem{BGL} D. Bakry, I. Gentil, M. Ledoux, \emph{Analysis and Geometry of Markov Diffusion Operators,} Springer-Verlag, 2014, Berlin.
\bibitem{Ba} V. Barbu,   \emph{Nonlinear Fokker-Planck equations with singular integral drifts and McKean-Vlasov SDEs,}  J. Evol. Equ. 26(2026),  1-26.
\bibitem{BR2020} V. Barbu, M.  R\"ockner, \emph{From nonlinear Fokker-Planck equations to solutions of distribution dependent SDE,}  Ann. Probab.      48(2020), 1902-1920.
\bibitem{BR2023} V. Barbu, M.  R\"ockner,  \emph{The evolution to equilibrium of solutions to nonlinear Fokker-Planck equation,} Indiana Univ. Math. J. 72(2023), 89-131.
\bibitem{BR2023JFA} V. Barbu, M.  R\"ockner,  \emph{Uniqueness for nonlinear Fokker-Planck equations and for McKean-Vlasov SDEs: the degenerate case,} J. Funct. Anal. 285(2023), Paper No. 109980, 37 pp.
\bibitem{BR2024LNM} V. Barbu, M.  R\"ockner, \emph{Nonlinear Fokker-Planck flows and their probabilistic counterparts,} Lecture Notes in Math. 2353, Springer, 2024, ix+212 pp.
\bibitem{DS} J.-D. Deuschel, D. W. Stroock, \emph{Large Deviations, } Acadeic Press, Boston 1989.
\bibitem{DS2} J.-D. Deuschel, D. W. Stroock, \emph{Hypercontractivity and spectral gap of symmetric
diffusion with applications to the stochastic Ising models,} J. Funct. Anal. 92(1990), 30-48.
\bibitem{DGGW} J. Dolbeault, I. Gentil, A. Guillin, F.-Y. Wang, \emph{$L^q$-functional inequalities and weighted porous media equations,} Potential Anal. 28(2008),  35-59.
    \bibitem{LSU}
O. A. Ladyzenskaja, V.A. Solonnikov, N. N. Ural'ceva, Linear and Quasilinear Equations of
Parabolic Type. Translated from the Russian by Smith, S. Translations of Mathematical Monographs,
vol. 23. American Mathematical Society, Providence, RI (1967).

\bibitem{R} M. Rehmeier, Weighted $L^1$-semigroup approach for nonlinear Fokker-Planck equations and generalized Ornstein-Uhlenbeck processes, \emph{arXiv:2308.09420}.
\bibitem{RW01} M. R\"ockner, F.-Y. Wang, \emph{Weak Poincar\'{e} inequalities and $L^2$-convergence rates of Markov semigroups,} J. Funct. Anal. 185(2001), 564-603.	
\bibitem{W99} F.-Y. Wang, \emph{Existence of the spectral gap for elliptic operators, } Arkiv  Math. 37(1999), 395-407.

 \bibitem{Wbook} F.-Y. Wang, \emph{Functional Inequalities, Markov Semigroups and Spectral Theory,}    Science Press/Elsevier, 2005, Beijing/New York.
 \bibitem{W13}  F.-Y. Wang, \emph{  Harnack Inequality for Stochastic Partial Differential Equations,}   Springer, Berlin, 2013.
 \bibitem{W09} F.-Y. Wang,  \emph{Log-Sobolev inequalities: different roles of Ric and Hess,}  Ann. Probab.  37(2009), 1587-1604.

\end{thebibliography}
\end{document}